\definecolor{darkbrown}{HTML}{431b00}
\newtheorem{thm}{Theorem}[section]
\newtheorem{lem}[thm]{Lemma}
\theoremstyle{definition}
\newtheorem{defn}[thm]{Definition}
\newtheorem{example}[thm]{Example}
\newtheorem{notation}[thm]{Notation}
\theoremstyle{remark}
\newtheorem{rem}[thm]{Remark}
\newcommand{\hv}{\operatorname{hv}}
\newcommand{\co}{\operatorname{co}}
\newcommand{\pc}{\text{prism}}
\newcommand{\rah}{\operatorname{rah}}
\def \co{\operatorname{co}}
\def\R{\mathbb{R}}
\def\2L{\Lambda_{\tilde{\gamma}}}
\def\1L{\Lambda_{\gamma}}
\renewcommand{\leq}{\leqslant}
\renewcommand{\geq}{\geqslant}
\begin{document}

\author{Pablo Angulo}
\address{ Department of Mathematics, Universidad Polit\'ecnica de Madrid}
\curraddr{}
\email{pablo.angulo@upm.es}

\author{Carlos García-Gutiérrez}
\address{ Department of Mathematics, Universidad Polit\'ecnica de Madrid}
\curraddr{}
\email{carlos.garciagutierrez@upm.es}

\thanks{The first author was supported by research grants MTM2014-57769-3-P, MTM2017-85934-C3-2-P, MTM2017-85934-C3-3-P, Severo Ochoa CEX2019-000904-S and PID2021-124195NB-C31 from the Ministerio de Ciencia e Innovaci\'on (MCINN), ERC 301179 and  ERC Advanced Grant 834728.}

\title{The $2+1$ convex hull of a finite set}

\begin{abstract}

  We study $\R^2\oplus\R$-\emph{separately convex} hulls of finite sets of points in $\R^3$, as in \cite{KirchheimMullerSverak2003}.
  This notion of convexity, which we call $2+1$ convexity, corresponds to rank-one convex convexity, or quasiconvexity, when $\R^3$ is identified with certain subsets of matrices.
  We introduce ``$2+1$ complexes'', which generalize $T_n$ constructions, define the ``$2+1$-complex convex hull of a set'', and prove that it is an inner approximation to the $2+1$ convex hull.

  We also consider outer approximations to $2+1$ convexity based in the locality theorem of rank convexity, by iteratively chopping off ``$D$-prisms''.
  For many finite sets, this procedure reaches a ``$2+1$ $K$-complex'' in a finite number of steps, and thus computes the $2+1$ convex hull.
  We show examples of finite sets for which this procedure does not reach the $2+1$ convex hull in a finite number of steps, but we show that there is always a sequence of outer approximations built with $D$-prisms that
  converges to a $2+1$ $K$-complex.
  We conclude that $K^{rc}$ is always a ``$2+1$ $K$-complex'', which has interesting consequences.

  \smallskip

  \noindent \textbf{Keywords.} rank-one convex hull, 2+1 convex hull, 2+1 complex, D convex hull, quasiconvex hull, computation of D convex hulls
\end{abstract}

\maketitle
\pagestyle{myheadings}
\markleft{P. ANGULO, %
C. GARCIA-GUTIERREZ}

\section{Introduction}

The purpose of this paper is to study in detail the $D$-convex hull of a finite set $K$ of  $\mathbb{R}^3$, for the particular case of \emph{separate convexity in $\mathbb{R}^2\oplus \mathbb{R}$}, which was introduced in \cite{KirchheimMullerSverak2003}. We shall call this $2+1$-convexity for short.
In contrast to the more common notion of separate convexity, the wave cone is not the cone over a finite set, which produces a number of interesting new phenomena.

For $2+1$-convexity any $5$ point set with non trivial $2+1$-convex hull has some rank one connection, or some $T_4$ (\cite[6.25]{KirchheimMullerSverak2003}).
However, an explicit $6$ point set $C_6$ was built (figure \ref{ex:kms6set_1}) with non trivial convex hull but without rank one connections or $T_4$'s. 

	\begin{figure}[h!]
		\includegraphics[width=6cm]{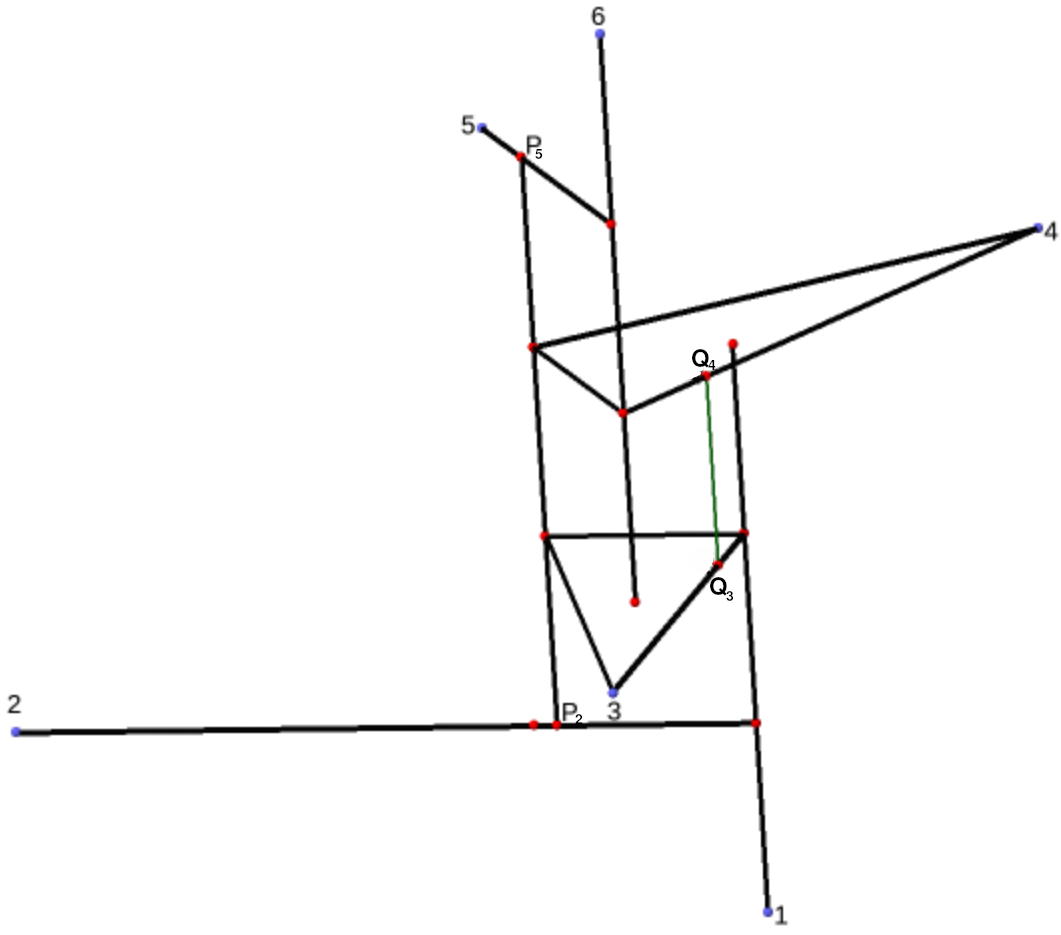}
		\caption{The $C_6$ point set described in the proof of \cite[6.26]{KirchheimMullerSverak2003}, together with its $2+1$-convex hull. $C_6$ consists of the six points labelled 1 to 6, such that the $z$-coordinate of point 1 is 1, etc.
			The auxiliar point $P\in\mathbb{R}^2$ is the intersection of segment that joins the projections of points 1 and 2, with the corresponding segment for points 5 and 6.
		}
		\label{ex:kms6set_1}
	\end{figure}
	
Another interesting feature of this set, very relevant from the computational point of view, is that its $2+1$-convex hull is described in terms of an auxiliary point $P\in\mathbb{R}^2$ which is not the projection of any of the 6 original points. This phenomena does not occur in separate convexity (\cite{MatousekPlechac1998,Matousek2001}).

$2+1$-convexity was introduced as a particular case of rank one convexity when $\mathbb{R}^3$ is identified with the following subset of $3\times 2$ matrices:
$$
\mathcal{M}=
\left\{
\left(\begin{array}{cc}
	x & 0\\
	y & 0\\
	0 & z
\end{array}\right):\, x,y,z\in\mathbb{R}\right\}.
$$

If we consider the set obtained by transposition of the matrices in $\mathcal{M}$, to get a subset of $2\times 3$ matrices, the wave cone induced in $\mathbb{R}^3$ by the rank one cone is exactly the same. While this transposition does not affect the laminate, rank one, or polyconvex hulls, it may affect the quasiconvex hull, since it is known that for ``diagonal'' $2\times 3$ matrices, functional quasiconvexity implies rank-one convexity (\cite{ HarrisKirchheimLin18},\cite{Muller99})
, while that statement is not known for ``diagonal'' $3\times 2$ matrices.

For a comprehensive and exhaustive presentation of the diverse notions of convexity, both for sets and funcions, we refer the reader to \cite{Dacorogna2006} and \cite{KirchheimMullerSverak2003}.
These notions of convexity are connected to different areas, as PDEs, calculus of variations, convex integration; or models for microstructures (\cite{Kirchheim2003,Dolzmann,KirchheimMullerSverak2003,Morrey1952,Tartar93}).
For these applications it is interesting to be able to efficiently compute the convex hull of a set for different notions of convexity.
In particular, in the context of convex integration, it is more natural to identify the set of matrices with an euclidean space $\mathbb{R}^k$, and study \emph{$D$-convex} hulls, for a more general \emph{wave cone} $D\subset \mathbb{R}^k$ (\cite{KirchheimMullerSverak2003, Kirchheim2003, SzekelyhidiTurbulence}).
	
For $D\subset \mathbb{R}^k$, it is known that the $D$-convex hull of a compact set $A$ is the zero set of the $D$-convex envelope of the function distance to $A$. This fact has been used to develop algorithms to estimate both $D$-convex envelopes of functions and $D$-convex hulls of subsets (see \cite{ArandaPedregal, Dolzmann, MatousekPlechac1998} and references therein).
Unfortunately, as shown in \cite{Matousek2001}, 
this approach for computing $D$-convex hulls of sets
requires computing $D$-convex envelopes of functions with an unrealistic precision (exponential in $|A|$).
	
From a geometrical perspective, in a few cases, some convex hulls can be calculated exactly and efficiently. For example, \cite{MatousekPlechac1998} presented an algorithm to calculate the separately convex hull of a finite set, which is related to rank one convexity and quasiconvexity, when $\mathbb{R}^k$ is identified with diagonal matrices.
Later, \cite{FranekMatousek} presented an algorithm to compute $D$-convex hulls exactly in the plane, where $D$ is the cone over a finite set of directions. His technique starts with an outer approximation which is iteratively reduced by a \emph{local bitting} algorithm.
For isotropic compact sets in the space of $2\times 2$ matrices, \cite{HeinzKruzik} developed a fast and exact algorithm to compute quasiconvex hulls.
Nevertheless, there is no exact algorithm for computing the rank one convex hull of a finite set of matrices in general.
	
More often than not, rank-one convex hulls and quasiconvex hulls are estimated through inner or outer approximations.
A complete determination of the rank-one, or the quasiconvex hull, is possible if an inner approximation is shown to agree with an outer approximation.
	
The first obvious inner approximation for the rank one convex hull $K^{rc}$ 
of a set $K$
is the lamination convex hull $K^{lc}$.
In contrast with standard convexity, the lamination convex hull is in general not sufficient to compute $K^{rc}$, but
other inner approximations have been used successfully.
The most well-known example is the $T_4$ configuration (see for example \cite{Muller99}), but a similar principle is used in more sophisticated examples (\cite{KirchheimMullerSverak2003, Kirchheim2003,Pompe2010,SebastyenSzekelyhidi,Harris2016}).
The 
structure of
those inner approximations is remarkably similar.
In section \ref{section:complexconvexhull}, we define \textbf{$2+1$-complexes}, and
the \textbf{$2+1$-complexes convex hull}, $K^{cc}$, in an attempt to make the inner approximation approach  more systematic.
For simplicity we define complexes only for $2+1$-convexity, but the extension to an arbitrary $D$ is natural. We are not aware of any example where $K^{cc}$ does not correspond to the $D$-convex hull $K^D$.

The polyconvex hull $K^{pc}$ is a well known outer approximation to the quasiconvex hull $K^{qc}$, which is an outer approximation to the rank one convex hull $K^{rc}$.
In some useful, but scarce, examples, $K^{pc}$ agrees with $K^{rc}$, but in general it is strictly larger.

The quasiconvex hull $K^{qc}$ is known to agree with $K^{rc}$ for some subsets of matrices \cite{Muller99}, although Šverák constructed sets whose rank-one convex hull is strictly smaller than their quasiconvex hull \cite{Sverak92}.

There are very few functions that are known to be quasiconvex but not polyconvex (\cite{Sverak92,Faraco-Zhong}), but they can be used to find outer approximations that are finer than the polyconvex hull.
In section \ref{section:polyconvex} we define the \textbf{polyconvex${++}$} convex hull $K^{pc++}$ of $K\subset \mathbb{R}^3$, which is an outer approximation to $K^{rc}$ finer than $K^{pc}$, which uses appropriate versions of  Šverák functions.
We show a specific $S\subset \mathbb{R}^3$ for which $S^{pc++}$ is a strict outer approximation to $S^{rc}$.
It was previously known that for separate convexity in $\mathbb{R}^3$, the analogue of $K^{pc++}$ is strictly larger than $K^{rc}$ (\cite{MatousekPlechac1998, Matousek2001}).

In section \ref{section:kirchheimballs} we define the $2+1$-prisms convex hull $K^{\pc}$, which consists of systematic application of theorem 4.7 in \cite{Kirchheim2003}.
It is very easy to prove that $K^{rc}\subset K^{\pc}$, but we will need to take limits of sequences of sets in order to prove our main theorem:

\begin{thm}\label{thm: maintheorem1}
	The $2+1$-convex hull $K^{rc}$ of any finite set $K\subset\R^3$ agrees with its $2+1$-complex convex hull and with its $2+1$-prism convex hull:
	$$K^{cc}=K^{rc}=K^{\pc}$$
	
	In particular, $K^{rc}$ is a $2+1$-complex.
\end{thm}

In section \ref{section: limit of outer approximations}, we show that $K^{\pc}$, and hence $K^{rc}$, can be approximated with a computer program.

We sum up the known inclusions and remark examples for which the inclusions do not hold.
\[
K^{lc}
\underbrace{\subsetneq}_{T_4}
K^{T_4}
\underbrace{\subsetneq}_{C_6}
K^{cc}
=
K^{rc}
=
K^{\pc}
=
K^{qc(2\times 3)}
\underbrace{\subsetneq}_{\text{section 3.3}}
K^{pc^{++}}
\underbrace{\subsetneq}_{T_4}
K^{pc}
\]

It remains an open question whether the rank one convex hull and the quasiconvex hull agree or not for $2+1$ convexity, when $2+1$ convexity is considered as a subset of $3\times 2$ matrices.

Our approach is to start with a straightforward outer approximation $M^0$ to $K^{rc}$ and then iteratively apply a step called ($2+1$, $K$) reduction.
At every step, the outer approximation $M_n$ is a $2+1$-complex (see def. \ref{def: 2+1 complejo}).
If all the \emph{extremal points} of some $M_n$ belong to $K$, lemma \ref{lem: A contenido 2+1 ch Extr A} proves that $M_n\subset K^{rc}$, and so they are equal.
The computations are exact, to the point that our implementation uses exact rational arithmetic.
However, if the points are in a ``generic position'', it is faster and qualitatively correct to use floating point arithmetic.
Often, this method computes $K^{\pc}$ in finite time, but we provide an example that shows that this is not the case in general, and $K^{\pc}$ can only be found in the limit.
However, even if taking a limit is necessary, the limit is always a finite $2+1$ complex whose extremal points belong to $K$.

As a by-product, the method proves the existence of a ``scaffolding'' of order $2$ for any finite set, which reduces the computation of the rank one convex hull to the computation of the lamination convex hull:
\begin{defn}\label{defn: scaffolding}
	A \textbf{scaffolding} of order $r$ of a finite set $K\subset \R^{m\times n}$ is a finite set $\tilde{K}\subset \R^{m\times n}$  that contains $K$ and such that:
	\begin{itemize}
		\item $\tilde{K}^{rc} = K^{rc} $
		\item $\tilde{K}^{lc,r} = \tilde{K}^{lc} = \tilde{K}^{rc}$
	\end{itemize}
\end{defn}
where $K^{lc,0}=K$, and, for any non-negative integer $r$, $K^{lc,r+1}$ is the union of all the rank one segments with endpoints in $K^{lc,r}$.

It is important that we impose a finite order for the $lc$-convex hull, since in general, the lamination convex hull can not be computed by adding rank one connections in a finite number of steps (\cite{Harris2016}).
The existence of a scaffolding of finite order is a direct consequence of the fact that $K^{rc}$ is a finite $2+1$-complex.
For $2+1$-convexity, a scaffolding can be found of order at most $2$.

\begin{thm}\label{thm: scaffolding}
	For any finite set $K$, there is a scaffolding $\tilde{K}$ of order $2$ for $K$, for $2+1$-convexity.
\end{thm}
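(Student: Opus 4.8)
The plan is to read off the scaffolding directly from the output of the algorithm of Theorem~\ref{thm: maintheorem1}. First I would run the algorithm to obtain the terminal outer approximation $M^n$, which by Theorem~\ref{thm: maintheorem1} equals $K^{rc}$ and is a $2+1$-complex $C$ whose $0$-cells (vertices) all lie in the grid $G$ of Definition~\ref{defn:grid}. I then set $\tilde{K} := K \cup V(C)$, where $V(C)$ is the finite vertex set of $C$. Since every vertex of $C$ lies in $C = K^{rc}$ and $K \subseteq K^{rc}$, we have $K \subseteq \tilde{K} \subseteq K^{rc}$; applying the $rc$-hull operator and using idempotency gives $K^{rc} \subseteq \tilde{K}^{rc} \subseteq (K^{rc})^{rc} = K^{rc}$, so $\tilde{K}^{rc} = K^{rc}$. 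This already secures the first bullet of Definition~\ref{defn: scaffolding}.

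The core of the argument is the second bullet: that two rounds of rank-one segments starting from $\tilde{K}$ recover all of $K^{rc}$. For this I would use the structure of $2+1$-complexes from Section~\ref{section:complexconvexhull}, namely that $C$ is a finite union of cells, each of which is either a convex polygon lying in a horizontal plane $\{z = \text{const}\}$ or an axis-aligned rectangle lying in a vertical plane, with all cell vertices in $V(C) \subseteq \tilde{K}$. I claim each such cell is contained in $\tilde{K}^{lc,2}$. For a horizontal polygon, its edges are rank-one (horizontal) segments between vertices, hence appear in $\tilde{K}^{lc,1}$, and every interior point lies on a horizontal chord joining two boundary points, hence appears in $\tilde{K}^{lc,2}$. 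For a vertical rectangle, the two horizontal and two vertical sides are rank-one segments between its four vertices, so they lie in $\tilde{K}^{lc,1}$, and each interior point $(p,z)$ lies on the horizontal segment at height $z$ joining the two points of the vertical sides at that height, hence in $\tilde{K}^{lc,2}$. Taking the union over cells yields $K^{rc} = C \subseteq \tilde{K}^{lc,2}$.

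Combining the two steps gives the chain
\[
K^{rc} \subseteq \tilde{K}^{lc,2} \subseteq \tilde{K}^{lc} \subseteq \tilde{K}^{rc} = K^{rc},
\]
so all inclusions are equalities and in particular $\tilde{K}^{lc,2} = \tilde{K}^{lc} = \tilde{K}^{rc}$, which is the second bullet of Definition~\ref{defn: scaffolding}. Thus $\tilde{K}$ is a scaffolding of order $2$. For the cardinality bound I would count the grid $G$: the horizontal projections of its points are intersections of the $O(|K|^2)$ lines spanned by pairs of projected points of $K$, giving $O(|K|^4)$ horizontal positions, and over each position only $O(|K|)$ heights are relevant (the floor and roof levels of the cells meeting that vertical line). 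Hence $|V(C)| \le |G| = O(|K|^5)$, and $|\tilde{K}| \le |K| + |V(C)| = O(|K|^5)$.

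The main obstacle I expect is the per-cell order-$2$ claim in the second paragraph: it hinges on the precise classification of the cells of a $2+1$-complex as horizontal polygons and vertical rectangles (equivalently, that every $2$-cell has only horizontal or vertical edges), which must be extracted from Definition~\ref{def: 2+1 complejo} and the results of Section~\ref{section:complexconvexhull}. A secondary point requiring care is the height count in the grid, i.e.\ verifying that the $O(|K|)$-per-position bound on relevant heights is exactly what the algorithm's invariant on $M^n$ guarantees, since this is what turns the $O(|K|^4)$ count of horizontal positions into the stated $O(|K|^5)$.
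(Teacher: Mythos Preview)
Your cell-by-cell argument has a genuine gap: you classify the cells of the $2+1$-complex $C=K^{rc}$ as ``either a convex polygon lying in a horizontal plane or an axis-aligned rectangle lying in a vertical plane,'' but Definition~\ref{def: 2+1 complejo} explicitly allows three-dimensional cells (``Open subsets of $\R^3$ whose border is a union of the aforementioned elements''), and the proof of Lemma~\ref{lem:ahvis2+1complex} shows that $(A^f)^{hv}$ will in general contain solid triangular prisms. Your argument never treats these $3$-cells, and they are exactly where the order-$2$ claim becomes delicate; the obstacle you flag at the end (``the precise classification of the cells \dots\ as horizontal polygons and vertical rectangles'') is real, and the classification you assume is false.

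Worse, the gap cannot be closed by simply appending a prism case to your list. Take $K=\{(0,0,0),(2,0,0),(0,2,0),(0,0,1),(2,0,1),(0,2,1)\}$, the six vertices of a triangular prism. The three projected points span no crossing segments, so the grid is just $K$, the algorithm removes nothing, and $K^{rc}=(A^f)^{hv}$ is the full solid prism; with your choice $\tilde K=K\cup V(C)=K$ one finds that $\tilde K^{lc,1}$ is the nine edges of the prism and $\tilde K^{lc,2}$ is only its boundary surface. The interior point $(2/3,2/3,1/2)$ lies in $K^{rc}$ but not in $\tilde K^{lc,2}$: at height $1/2$ the only points of $\tilde K^{lc,1}$ are the three corner verticals, whose pairwise horizontal segments give only the triangle boundary, and no vertical segment through this point has both endpoints in $\tilde K^{lc,1}$ because the interiors of the top and bottom triangles appear only at order $2$. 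Hence $\tilde K^{lc,2}\subsetneq K^{rc}$ for this $K$. For comparison, the paper's proof does not decompose into cells at all: it takes $\tilde K=A^f$ and invokes the Remark after Definition~\ref{defn: hv convex hulls} that $A^{hv}\subset A^{lc,2}$; it is worth checking that remark against the same example.
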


The article is structured as follows: in section \ref{section:complexconvexhull} we define the $2+1$-complexes convex hull.
In section \ref{section:polyconvex} we use \emph{shovels} to define the
polyconvex$++$ hull $K^{pc++}$, and exhibit an example where
$K^{rc}\varsubsetneq K^{pc++}$. Section \ref{section:kirchheimballs} uses
Kirchheim's result \cite[Thm. 4.7]{Kirchheim2003} to prove that we can remove
from an outer approximation to the $2+1$-convex hull those extremal points that
do not belong ot $K$ to get a smaller outer approximation, using $D$-prisms.
Section \ref{section: limit of outer approximations} defines $K^{\pc}$ as a limit of outer approximations, and that the limit is always a $2+1$ $K$ complex, which concludes the proof of the main theorem.
Finally, in section \ref{section:conclusions} we discuss about some of the choices made, and possible generalizations.

\subsection{Acknowledgements}
We thank Daniel Faraco for introducing us into this fascinating subject and for many conversations.
We thank Jarmo Jäskeläinen, L\'aszlo Székelyhidi Jr, Bernd Kirchheim, Jan Kristensen and André Guerra for interesting conversations on this subject.

The software \texttt{sagemath} \cite{sage} was very useful for experimentation. It allowed us to write an exact implementation of ($2+1$, $K$) reduction, and perform many experiments.

\section{$2+1$ complexes and the $2+1$ complex convex hull}
\label{section:complexconvexhull}
The \textbf{lamination convex hull} of a subset of $\R^3$ is defined recursively:
\begin{defn}\label{defn: lamination convex hull}
  Let $A\subset\R^3$,
  \begin{itemize}
    \item
    $A^{lc,0}=A$
    \item
$
A^{lc,i+1}=\left\{
\lambda X+(1-\lambda) Y:
\begin{array}{l}
  X,Y\in A^{lc,i}\\
  \operatorname{rank}(X-Y)=1\\
  \lambda\in[0,1]
\end{array}
\right\}
$
  \item
  $A^{lc}=\bigcup_{i\geq 0}A^{lc,i+1}$
\end{itemize}
\end{defn}

The lamination convex hull is always contained in the rank one convex hull, but there are many examples in the literature of sets whose lamination convex hull is strictly smaller than the rank one convex hull.
The oldest and most popular example is the $T_4$ configuration (see Section 3.2 of \cite{KirchheimMullerSverak2003} for a definition). Apparently, this example was discovered by several authors at the same time (all references can be found in \cite{KirchheimMullerSverak2003}). For separate convexity, it is known that $T_4$-configurations and rank one connections fill the whole $K^{rc}$ (\cite{MatousekPlechac1998}), and for $2\times 2$ matrices, it is a very interesting open question (see \cite{Szekelyhidi05, FaracoSzeke08} for evidence in favour of the conjecture and some motivation).

This suggests the definition of a convex hull that contains $K^{lc}$ and is also contained in $K^{rc}$:
\begin{defn}\label{defn: T4 convex hull}
  Let $A\subset\R^3$,
  \begin{itemize}
    \item
    $A^{T_4,0}=A$
    \item
$
A^{T_4,i+1}=
\bigcup\left\{
F^{rc}:F\subset A^{T_4,i},
\begin{array}{l}
  F \text{ is a } T_4 \text{ configuration} \\
  \quad \text{  or a rank one connection}
\end{array}
\right\}
$
  \item
  $A^{T_4}=\bigcup_{i\geq 0}A^{T_4,i}$
\end{itemize}

\end{defn}

Nevertheless, the embedding of $T_n$-configurations sometimes fails to reach the rank one convex hull. The $C_6$ set (see figure \ref{ex:kms6set_1}), for $2+1$-convexity, has a nontrivial rank one convex hull and contains no $T_n$-configurations (\cite[6.26]{KirchheimMullerSverak2003}).

We will build an inner approximation to the $2+1$-rank one convex hull of a finite set of points that has not been shown to be different to the $2+1$-rank one convex hull.
We will do so by using $2+1$-complexes, and the inner approximation will be called \emph{$2+1$-complex convex hull}, $K^{cc}$ (see Defn. \ref{defn: 2+1 complexes convex hull}).
This approach comes naturally after reading \cite{SebastyenSzekelyhidi}, \cite{Pompe2010} or \cite{Harris2016}. To do so we shall need a couple of definitions and lemmas:

\begin{defn}
\label{def: 2+1 complejo}
    A \textbf{$2+1$-complex} $M$ is a closed and bounded subset of $\R^3$ that can be expressed as the union of a \textbf{finite list of elements} $L$, where each element is of the following kind:
\begin{description}
  \item[Point] a point of $\R^3$
  \item[Horizontal segment] A relatively open segment perpendicular to the $z$ axis whose two boundary points belong to $L$.
  \item[Vertical segment] A relatively open segment parallel to the $z$ axis whose two boundary points belong to $L$.
  \item[Horizontal triangle] A relatively open triangle contained in a plane perpendicular to the $z$ axis whose three boundary segments belong to $L$.
  \item[Vertical rectangle] A relatively open rectangle contained in a plane that contains the $z$ axis, whose four boundary segments belong to $L$. Two of those boundary segments are horizontal and two are vertical.
  \item[Open subset] An open subset of $\R^3$ whose boundary is a union of elements of the other kinds, and all of those elements belong to $L$.
\end{description}
\end{defn}

\begin{defn}\label{defn: extremal point}
    A point $p$ in a subset $M$ of $\mathbb{R}^3$ is \textbf{extremal} iff there is neither a vertical nor a horizontal relatively open segment contained in $M$ that contains $p$.
\end{defn}

This definition is consistent with definition 3.12 in \cite{Kirchheim2003}, for the particular case of $2+1$-complexes.

\begin{lem}
    The set $Extr(K)$ of extremal points of a $2+1$-complex $K$ is finite.
\end{lem}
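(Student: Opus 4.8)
The plan is to bound $Extr(K)$ by the finite collection of $0$-dimensional elements of the defining list. Fix a finite list $L$ realizing the $2+1$-complex $K$ as in Definition \ref{def: 2+1 complejo}, and let $V\subset\R^3$ be the set of those elements of $L$ that are \textbf{Points}. Since $L$ is finite, $V$ is finite, so it suffices to prove the inclusion $Extr(K)\subseteq V$.

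I would argue this contrapositively: given $p\in K$ with $p\notin V$, I will produce a horizontal or vertical relatively open segment contained in $K$ and passing through $p$, so that $p$ is not extremal. Since $K=\bigcup_{e\in L} e$ and $p\notin V$, there is an element $e\in L$ with $p\in e$ that is not a point. Each positive-dimensional element of $L$ is \emph{relatively open} as a subset of its affine span (a line, plane, or all of $\R^3$), so the mere membership $p\in e$ already places $p$ in the relative interior of $e$ rather than on its boundary; in particular the argument never needs to track how $p$ sits in the boundary faces, although consistency is guaranteed by the fact that those boundary pieces are themselves lower-dimensional elements of $L$.

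It then remains to exhibit the required segment in each case. If $e$ is a horizontal segment, or $p$ lies in the relative interior of a horizontal triangle, then a short horizontal relatively open segment through $p$ stays inside $e\subseteq K$. If $e$ is a vertical segment, or $p$ lies in the relative interior of a vertical rectangle, the analogous vertical relatively open segment works, using that the rectangle has two sides parallel to the $z$ axis. Finally, if $e$ is an open subset of $\R^3$, a small ball about $p$ is contained in $K$ and supplies, say, a horizontal segment through $p$. In every case $p$ lies on a horizontal or vertical relatively open segment contained in $K$, so $p$ is not extremal. Hence $Extr(K)\subseteq V$, and $Extr(K)$ is finite.

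The computation itself is routine; the only point requiring care is that extremality is a \emph{global} condition, in that the forbidden segment may live anywhere in $K$, whereas the argument only ever exhibits \emph{one} such segment locally inside a single element of $L$. What makes this legitimate is precisely that the relatively open elements exclude their own boundaries while those boundaries reappear as lower-dimensional elements of the same finite list, so no point outside $V$ can fail to be an interior point of some positive-dimensional element.
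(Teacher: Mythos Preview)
Your proof is correct and follows essentially the same approach as the paper: both argue that only the finitely many \textbf{Point} elements of the decomposition can be extremal, since every higher-dimensional element is relatively open and admits a horizontal or vertical segment through each of its points. The paper compresses your case analysis into the single remark that the non-point elements are ``foliated by either horizontal or vertical lines,'' but the content is the same.
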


\begin{proof}
    In a decomposition of $K$ into elements, only points can be extremal points, since the other elements are foliated by either horizontal or vertical lines, and by hypothesis the total number of elements is finite.
\end{proof}

The following lemma is a generalization of proposition 6.26 in \cite{KirchheimMullerSverak2003}.

\begin{lem}
\label{lem: A contenido 2+1 ch Extr A}
Let $M$ be a $2+1$-complex. Then
\[
M \subset [Extr(M)]^{rc}
\]
\end{lem}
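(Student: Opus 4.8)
The plan is to prove the inclusion $A \subset [\mathrm{Extr}(A)]^{rc}$ by induction on the structure of the finite list $L$ of elements that compose the $2+1$-complex $A$. The key idea is that every non-extremal point of $A$ lies on a horizontal or vertical relatively open segment contained in $A$, and such segments are rank-one directions for the $2+1$ cone; by pushing points out to the boundaries of the elements they sit in, and repeating, one eventually expresses every point of $A$ as a rank-one combination of extremal points. Since $[\mathrm{Extr}(A)]^{rc}$ is closed (as the rank-one convex hull of a finite, hence compact, set of points is compact) and rank-one convex, it suffices to show each relatively open element of $L$ is contained in it.

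\emph{First I would} handle the elements in increasing order of dimension, which matches the recursive structure of Definition \ref{def: 2+1 complejo}, where each element's boundary pieces are themselves elements of $L$. For a \textbf{point} element that happens to be extremal, the inclusion is immediate. For a \textbf{horizontal} or \textbf{vertical segment}, its two boundary points belong to $L$ and, by the induction hypothesis on lower-dimensional elements, lie in $[\mathrm{Extr}(A)]^{rc}$; since the segment is a rank-one segment (horizontal and vertical directions both being admissible $2+1$ directions), every interior point is a rank-one convex combination of its endpoints and hence lies in the hull. The crucial observation making this work is that a rank-one convex hull is, by definition, closed under taking rank-one segments between any two of its points.

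\emph{Next I would} treat the \textbf{horizontal triangles} and \textbf{vertical rectangles}. A horizontal triangle lies in a plane $z = \mathrm{const}$, so every direction in that plane is horizontal and therefore rank-one; any interior point lies on a horizontal segment joining two points of the boundary triangle, and those boundary segments are elements of $L$ already shown to lie in the hull. A vertical rectangle is foliated by segments parallel to the $z$-axis (vertical, hence rank-one) whose endpoints lie on the two horizontal boundary segments, again elements of $L$ covered by the induction hypothesis. For the top-dimensional \textbf{open subsets}, I would foliate by horizontal segments: each interior point lies on a maximal horizontal segment whose endpoints lie on the boundary of the open set, and that boundary is by hypothesis a union of lower-dimensional elements of $L$ already contained in the hull, so the interior point is a rank-one combination of hull points.

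\emph{The hard part} will be the bookkeeping at the top-dimensional step: one must verify that the maximal horizontal (or vertical) segment through an interior point of an open element actually has both endpoints on the element's boundary, i.e. that the foliation lands on $L$ rather than escaping the complex. This relies on $A$ being closed and on the boundary description in the definition of an open-subset element, but it is the place where the argument is least mechanical. A clean way to avoid delicate limiting arguments is to push a given point out along one foliation direction until it first meets the boundary (which happens because the element is bounded or, if unbounded, because we may instead argue the complex is a closed set built from finitely many pieces), land on a strictly lower-dimensional element, and invoke the induction hypothesis; the finiteness of $L$ guarantees the recursion terminates.
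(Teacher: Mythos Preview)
Your inductive scheme has a genuine gap at the base case that cannot be repaired within this framework. You handle only those point elements of $L$ that happen to be extremal; non-extremal point elements are never established to lie in $[\mathrm{Extr}(A)]^{rc}$ before you move on to segments. When you then argue that a segment's two boundary points ``by the induction hypothesis on lower-dimensional elements'' lie in the hull, you are assuming what you have not shown. Your closing paragraph tries to patch this by ``pushing to the boundary and recursing'', claiming the finiteness of $L$ forces termination, but this is false: the recursion can cycle. Take a $T_4$ configuration realised as a $2+1$-complex, with extremal points $P_1,\dots,P_4$ and auxiliary (non-extremal) points $Q_1,\dots,Q_4$, where each $Q_i$ lies in the interior of the rank-one segment from $P_i$ to $Q_{i+1}$ (indices mod $4$). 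To place $Q_1$ in the hull you need $Q_2$ in the hull, which needs $Q_3$, which needs $Q_4$, which needs $Q_1$ again. The finiteness of $L$ does not prevent revisiting elements.

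The underlying reason the approach fails is that your argument, if it worked, would actually prove the stronger inclusion $A\subset[\mathrm{Extr}(A)]^{lc}$, since every step is a single rank-one convex combination. That stronger statement is false: for the $T_4$ just described, $[\mathrm{Extr}(A)]^{lc}=\{P_1,\dots,P_4\}$ while $A$ is strictly larger. This is precisely why the paper's proof does \emph{not} attempt a constructive lamination argument. Instead it uses the dual characterisation of the rank-one convex hull via $2+1$-convex functions: assuming some $2+1$-convex $f$ vanishes on $\mathrm{Extr}(A)$ but is positive somewhere on $A$, one looks at the set where $f$ attains its maximum on $A$ and finds a point of that set which is extremal \emph{relative to the maximum set}; convexity of $f$ along the rank-one segment through that point then forces a contradiction. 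This maximum-principle argument is what lets one handle $T_4$-type cycles without ever exhibiting an explicit lamination.
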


\begin{proof}
Let $f:\R^3\rightarrow\R$ be a $2+1$-convex function that vanishes on $Extr(M)$.

We must prove that it vanishes on all of $M$.

Assume the contrary, and let $C>0$ be the maximum of $f$ on $M$ (which is compact), and let $M^*\subset M$ be the subset of $M$ where this maximum is attained: $M^*=M\cap f^{-1}(C)$.
There is at least one point $x^*\in M^*$ that is not contained in the interior of neither a horizontal nor a vertical segment contained in $M^*$.
However, since $f(x^*)=C$, the point $x^*$ is not in $Extr(M)$, and there is a segment $S$ contained in $M$ that is either horizontal or vertical and such that $x^*$ lies in its interior.
Thus $S$ cannot be contained in $M^*$.

However, since $f|S$ is convex and bounded by $C$, the only way it can attain the value $C$ at $x^*$ is if it constant on $S$.
This implies that $S$ is contained in $M^*$, which is a contradiction.
\end{proof}

\begin{defn}\label{defn: 2+1 K complex}
    For a finite set $K$, a \textbf{$2+1$ $K$-complex} is a $2+1$ complex whose extremal points are contained in $K$.
\end{defn}

\begin{defn}\label{defn: 2+1 complexes convex hull}
    The \textbf{2+1 complexes convex hull} of a finite set $K$ is the union of all $2+1$ $K$-complexes.
\[
K^{cc} = \displaystyle\bigcup_{\substack{M\text{ is }2+1\text{-complex}\\Extr(M)\subset K}} M
\]
\end{defn}

\begin{lem}
\label{lem:KcccontenidoKrc}
Let $K$ be a finite subset of $\R^3$.
\[
K^{cc}\subset K^{rc}
\]
where $K^{rc} $ is the $2+1$-convex hull of $K$.%
\end{lem}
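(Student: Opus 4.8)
The plan is to reduce the inclusion to a pointwise statement and then invoke Lemma \ref{lem: A contenido 2+1 ch Extr A} together with the monotonicity of the $2+1$-convex hull operator. First I would fix an arbitrary point $x\in K^{cc}$. By Definition \ref{defn: 2+1 complexes convex hull}, $K^{cc}$ is the union of all $2+1$-complexes whose extremal points lie in $K$, so there is at least one $2+1$-complex $L$ with $x\in L$ and $Extr(L)\subset K$.

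The core step is then immediate from the previously established Lemma \ref{lem: A contenido 2+1 ch Extr A}, applied to the complex $A=L$: it gives $L\subset [Extr(L)]^{rc}$. It remains only to relate $[Extr(L)]^{rc}$ to $K^{rc}$, which is where monotonicity of the hull enters. Since $Extr(L)\subset K$, and the $2+1$-convex hull is monotone under inclusion --- if $f$ is a $2+1$-convex function nonpositive on $K$ then it is \emph{a fortiori} nonpositive on the smaller set $Extr(L)$ --- we obtain $[Extr(L)]^{rc}\subset K^{rc}$. Chaining the two inclusions yields
\[
x\in L\subset [Extr(L)]^{rc}\subset K^{rc},
\]
and since $x$ was arbitrary we conclude $K^{cc}\subset K^{rc}$.

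The argument has essentially no obstacle: the whole content is carried by Lemma \ref{lem: A contenido 2+1 ch Extr A}, and the only additional ingredient is the monotonicity of $(\cdot)^{rc}$, a formal property of any hull defined by a family of test functions. The one point I would state carefully is the direction of monotonicity (a larger generating set produces a larger hull), spelling it out once via the definition of $K^{rc}$ through $2+1$-convex functions, so that the reader sees why $Extr(L)\subset K$ forces $[Extr(L)]^{rc}\subset K^{rc}$ rather than the reverse.
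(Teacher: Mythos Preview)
Your argument is correct and is precisely the paper's approach: the paper's proof is the single sentence ``It follows trivially from Lemma~\ref{lem: A contenido 2+1 ch Extr A},'' and your proposal just unpacks that sentence by picking $L$ with $Extr(L)\subset K$, applying the lemma, and using monotonicity of $(\cdot)^{rc}$.
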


\begin{proof}
It follows trivially from lemma \ref{lem: A contenido 2+1 ch Extr A}.%
\end{proof}

\begin{lem}
    The extremal points of $K^{cc}$ are contained in $K$:
\[
    Extr(K^{cc}) \subset K
\]
\end{lem}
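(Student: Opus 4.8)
The plan is to argue by contraposition directly from the definitions, showing that any point of $K^{cc}$ lying outside $K$ cannot be extremal.

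First I would fix a point $p \in K^{cc}$ with $p \notin K$ and aim to produce a horizontal or vertical relatively open segment inside $K^{cc}$ that contains $p$; by Definition \ref{defn: extremal point} this is exactly what it means for $p$ to fail to be extremal. By Definition \ref{defn: 2+1 complexes convex hull}, membership $p \in K^{cc}$ means there is at least one $2+1$-complex $L$ with $Extr(L) \subset K$ and $p \in L$. I would then observe that, since $Extr(L) \subset K$ and $p \notin K$, the point $p$ is not an extremal point of $L$. Applying the definition of extremal point inside the complex $L$, there must exist a horizontal or vertical relatively open segment $S \subset L$ with $p \in S$.

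The conclusion is then immediate: since $L \subset K^{cc}$, the segment satisfies $S \subset K^{cc}$, so $S$ is a horizontal or vertical relatively open segment contained in $K^{cc}$ and containing $p$. Hence $p \notin Extr(K^{cc})$, which is the contrapositive of the claim $Extr(K^{cc}) \subset K$.

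The one point requiring care is that Definition \ref{defn: extremal point} is phrased for a $2+1$-complex, whereas $K^{cc}$ is a priori only a union of such complexes and need not itself carry a complex structure at this stage of the argument. However, the property of being extremal refers to the ambient set only through the test of whether a horizontal or vertical relatively open segment lies inside it and passes through the point; this test is meaningful for any subset of $\mathbb{R}^{3}$, and it is this natural extension of Definition \ref{defn: extremal point} that I would use for $K^{cc}$. No finiteness or closedness of $K^{cc}$ is needed, so this is the only genuine subtlety, and it is essentially notational rather than a real obstacle.
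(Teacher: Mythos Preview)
Your proof is correct and is essentially the contrapositive of the paper's argument: the paper takes $x\in Extr(K^{cc})$, picks a witnessing complex $L$, and observes that $x$ must then be extremal in $L$ (the ``straightforward'' step being precisely your segment argument with $S\subset L\subset K^{cc}$), hence $x\in K$. Your version is in fact more explicit, and your remark about extending Definition~\ref{defn: extremal point} to arbitrary subsets of $\mathbb{R}^3$ fills in a point the paper leaves implicit.
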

\begin{proof}
  Let $x$ be an extremal point of $K^{cc}$.
  If there is some $2+1$ $K$-complex $M$ that contains $x$, and such that $x$ is not extremal for $M$, then $x$ would belong to the interior of either a vertical or horizontal segment contained in $M$, and that segment would also be contained in $K^{cc}$, and hence $x$ would not be an extremal point of $K^{cc}$.

  We deduce that $x$ is an extremal point of any $2+1$ $K$-complex $M$ that contains $x$.
  Since $x\in K^{cc}$, there is at least one such $M$, and this implies that $x\in K$.

\end{proof}

\section{The polyconvex$++$ hull}
\label{section:polyconvex}

The determinant of the minors of the $2\times 3$ matrices are polyconvex functions, which in our coordinates are the functions $xz$ and $yz$.
Any linear combination of them produces a polyconvex function.
In analogy with \cite{Sverak92}, \cite{Faraco-Zhong} and \cite{MatousekPlechac1998}, we consider the following functions:

\begin{defn}\label{defn: vertical quadrant}
  A \textbf{shovel function} is a function
  $$f(x,y,z)=\max\{l(x,y),0\}\cdot\max\{s(z-z_0),0\},$$
  for some $z_0\in \R$, an affine functional $l:\R^2\rightarrow \R$ and $s \in \{-1,1\} $.

  A \textbf{shovel} $L$ is the set where a shovel function is positive:
  \[
  L=\{(x,y,z)\in\R^3:\: l(x,y)>0, s (z-z_0)>0  \}
  \]
\end{defn}
A shovel function is a $2+1$ convex function, hence its zero set is $2+1$ convex, and shovels can be used to \emph{carve} $2+1$-convex sets.

\begin{defn}
  Given a compact set $K$, its \textbf{polyconvex$++$ hull} $K^{pc++}$ is the complement of the union of all the shovels that do not intersect $K$.
\end{defn}
\begin{rem}
  Since the complement of shovels are $2+1$ convex, the polyconvex$++$ hull of any set contains its rank one convex hull:
  $$ K^{rc}\subset K^{pc++}$$
\end{rem}

Shovels are analogous to the quadrants used in section §4 of \cite{MatousekPlechac1998} to compute separately convex hulls in the plane.
Quadrants are zero sets of quadrant functions:
$$
q(x_1,\dots,x_n) = \max\{s_1(x_1-x^0_1),0\}\cdot\dots\cdot \max\{s_n(x_n-x^0_n),0\}
$$
where $x^0=(x^0_1,\dots,x^0_n)\in\R^n$, and $s_i\in\{-1,1\}$ for $i=1\dots n$.

At the end of section §4 of \cite{MatousekPlechac1998}, there is a counterexample showing that quadrants do not compute the separately convex hull of a finite set in $\R^3$.

There is a limitation to that simple example: the separately convex hull of the six points is disconnected, and thus the separately convex hull can be computed for the subsets of points that generate each connected component alone, and then combined together to yield the whole hull using the ``structure theorem'' of rank one convex hulls: if $K^{rc}\subset U_1\cup U_2$ for disjoint open sets $U_1$ and $U_2$, then $K^{rc} = (K \cap U_1)^{rc}\cup (K \cap U_2)^{rc}$ (\cite{Kirchheim2003}, \cite{Matousek2001}, \cite{Pedregal1993}).
The quadrants are indeed enough to compute the separately convex hull of each subset of points.

However, in \cite{MatousekPlechac1998} there is also the $20$-point example 3.6, where the separately convex hull is connected, and quadrants does not compute $K^{rc}$, even if combined with the structure theorem.

We also provide a simple example that proves that for $2+1$-convex hulls shovels do not compute the $2+1$ convex hull, even if combined with the structure theorem.

\begin{example}\label{example: spiral staircase}
    Consider the \emph{Spiral Staircase set} consisting of the following points:
\[
   S = \{(1,0,0),(0,0,0),(0,0,1),(0,1,1),(0,1,2),(1,1,2)\}
\]

The lamination convex hull $S^{lc}$ is the union of the five segments that join consecutive points in $S$.
The polyconvex$++$ hull $S^{pc++}$ contains $S^{lc}$.
However, it also includes the horizontal triangle delimited by the three points $(0,0,1)$, $(0,1,1)$ and $(1/2,1/2,1)$ (points $3$, $4$ and $Q$ in figure \ref{figure: snake set}).
Indeed, let $f(x,y,z)=\max\{l(x,y),0\}\cdot\max\{s(z-z_0),0\}$ be a shovel function which vanishes on $S$.
If $z_0<1$ and $s=1$, then $l$ must vanish on the projections onto $\R^2$ of points 3, 4, 5, and 6, and then it must vanish on the projection of $Q$, hence $f$ vanishes on $Q$.
Similarly, if $z_0>1$ and $s=-1$, $l$ must vanish on the projections of the points 1, 2, 3 and 4, hence $f$ vanishes on $Q$.
In the other situations, it is $\max\{s(z-z_0),0\}$ which vanishes on $z=1$, the $z$ coordinate of $Q$.
In any case, $f(Q)$ is zero.

\begin{figure}[h!]
        \includegraphics[width=6cm]{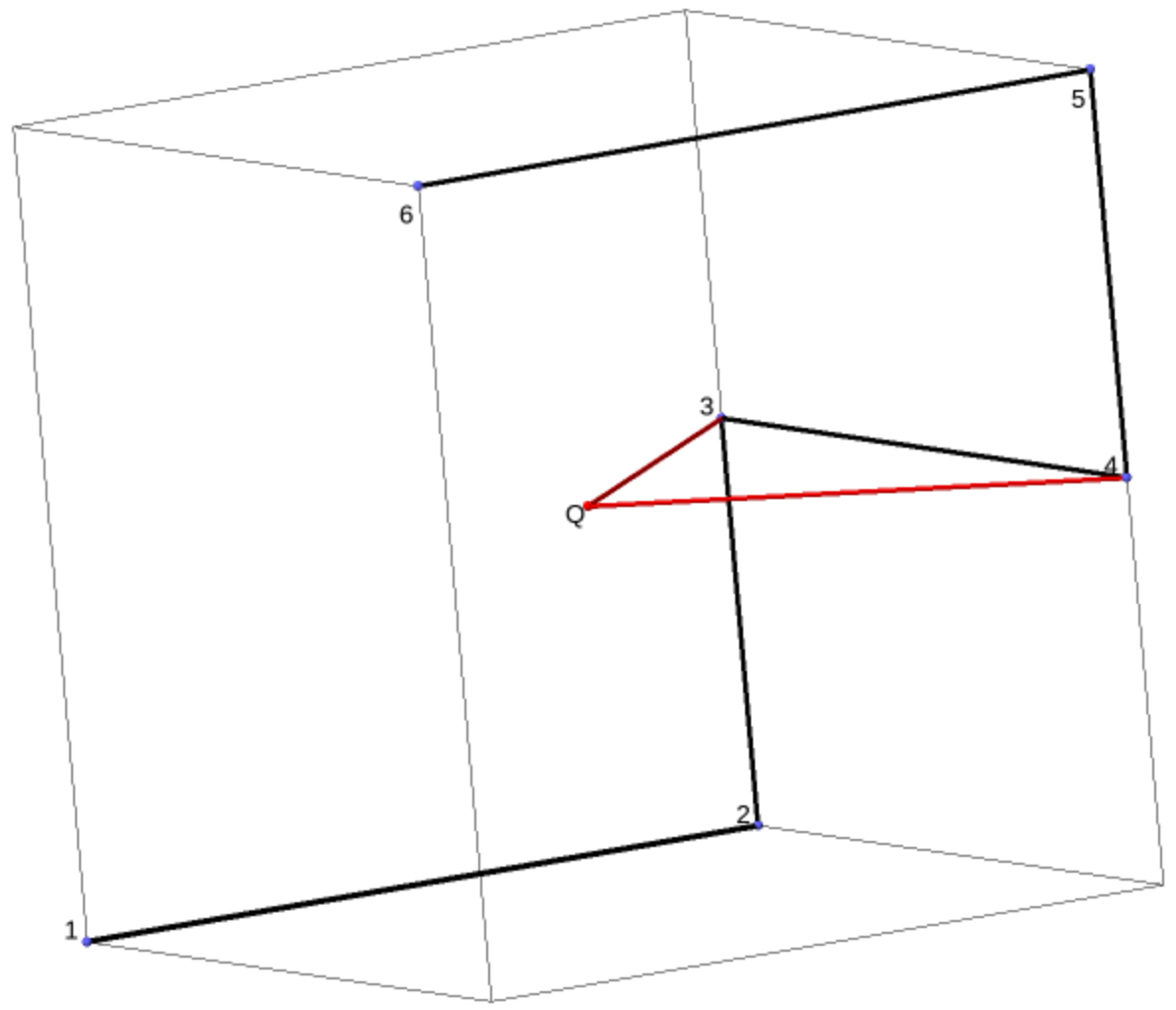}
        \caption{Spiral Staircase set}
        \label{figure: snake set}
\end{figure}
\end{example}

Thus $S^{pc++}$ contains an extremal point which is not in $S$.
We will see in section \ref{section:kirchheimballs} that $S^{rc}=S^{lc}$, hence $S^{rc}\subsetneq S^{pc++}$.

\section{$D$-prisms}
\label{section:kirchheimballs}

The result that makes our outer approximations work is the following well-known theorem

\begin{thm}[Thm. 4.7 from \cite{Kirchheim2003}]
\label{thm:kirchheimballs}
Let $B$ be bounded and $K$ compact in $\R^n$, then
\[
K^{rc}\cap B=\left[
(B\cap K)\cup (\partial B\cap K^{rc})\right]^{rc}\cap B
\]
\end{thm}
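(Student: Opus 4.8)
The plan is to prove the two inclusions of the identity separately, writing $N := (B \cap K) \cup (\partial B \cap K^{rc})$ for the set inside the brackets. The inclusion $N^{rc} \cap B \subseteq K^{rc} \cap B$ is immediate and requires no localization: since $B \cap K \subseteq K \subseteq K^{rc}$ and $\partial B \cap K^{rc} \subseteq K^{rc}$, we have $N \subseteq K^{rc}$, so by monotonicity and idempotency of the hull operation $N^{rc} \subseteq (K^{rc})^{rc} = K^{rc}$; intersecting with $B$ finishes this direction.

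All the content lies in the reverse inclusion $K^{rc} \cap B \subseteq N^{rc}$, and I would establish it with a laminate (rank-one convex measure) representation together with an \emph{optional stopping} argument. Recall that for compact data a point lies in the rank-one convex hull iff it is the barycenter of a laminate supported in that data, realized as a weak-$\ast$ limit of finite-order laminates; each finite-order laminate is the terminal law of a finite martingale $X_0 = x, X_1, \ldots, X_m$ whose successive increments lie on rank-one lines. So for $x \in K^{rc} \cap B$ I would fix such an approximating sequence of martingales with barycenter $x$ whose terminal laws concentrate on $K$.

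The key step is to stop each of these martingales at the first exit from $B$, inserting the crossing point with $\partial B$: this is legitimate because each increment is collinear along a rank-one line, so re-splitting the last step through its intersection with $\partial B$ is again an admissible rank-one splitting. The stopped process is still a rank-one martingale with barycenter $x$, and its terminal law is supported on points that either never leave $B$ (which in the limit lie in $\overline{B} \cap K$) or lie on $\partial B$. For a boundary exit point $z$, the conditional law of the original, un-stopped terminal value is a laminate with barycenter $z$ supported in $K$, so $z \in \partial B \cap K^{rc}$. Passing to the weak-$\ast$ limit yields a laminate with barycenter $x$ supported in $(\overline{B} \cap K) \cup (\partial B \cap K^{rc}) = N$, whence $x \in N^{rc}$.

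The main obstacle is carrying out the optional stopping at the level of weak-$\ast$ limits rather than for a single genuine martingale: one must perform the stopping on the finite-order laminates, control the convergence of the stopped measures and of their supports, and verify in the limit that each boundary exit point really belongs to the \emph{closed} set $K^{rc}$, using weak-$\ast$ compactness of laminates and closedness of $K^{rc}$. A minor point is the discrepancy between $\overline{B} \cap K$ and $B \cap K$, but it is harmless since $\partial B \cap K \subseteq \partial B \cap K^{rc}$, so the limiting support is absorbed into $N$ either way.
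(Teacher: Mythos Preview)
The paper does not contain a proof of this statement: Theorem~\ref{thm:kirchheimballs} is quoted from Kirchheim's habilitation thesis \cite[Thm.~4.7]{Kirchheim2003} and used as a black box (the ``locality theorem'' that underpins the biting algorithm). There is therefore no in-paper argument to compare your proposal against.

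That said, your outline is essentially the standard proof one finds in the cited source. The easy inclusion is exactly as you write. For the hard inclusion, the laminate (rank-one martingale) representation together with optional stopping at $\partial B$ is precisely the mechanism Kirchheim uses: re-splitting a rank-one increment through its intersection with $\partial B$ is again an admissible elementary splitting, so the stopped process is a prelaminate with barycenter $x$; the conditional future of each boundary atom is itself a laminate supported in $K$, placing that atom in $\partial B\cap K^{rc}$. You have also correctly identified the only genuine technical point, namely controlling the weak-$\ast$ limit of the stopped prelaminates so that the limiting laminate is supported in the \emph{closed} set $N=(\overline{B}\cap K)\cup(\partial B\cap K^{rc})$. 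The clean way to do this is to build the approximating prelaminates as a nested refining sequence (so that the stopped atoms on $\partial B$ are stable along the sequence rather than moving with $n$), after which weak-$\ast$ compactness of laminates with uniformly bounded support and closedness of $K^{rc}$ finish the job. Your remark that the discrepancy between $B\cap K$ and $\overline{B}\cap K$ is harmless, because $\partial B\cap K\subset\partial B\cap K^{rc}$, is also correct.
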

We will use this theorem to remove an extremal point from an outer approximation $M$ of the set $K^{rc}$ that does not belong to $K$, and reach a smaller outer approximation.
We will use the previous theorem only when $B$ is a $2+1$-prism.
\begin{defn}\label{defn: kirchheim prism}
  Let $K$ be a bounded subset of $\R^3$ and let $M\subset \R^3$ be a subset that contains $K^{rc}$.
  A \textbf{$2+1$-prism} for $(M,K)$ is the cartesian product $B=T\times I$ of an open (horizontal) triangle $T$ and an open (vertical) interval $I$ such that
\begin{itemize}
  \item $K\cap B = \emptyset$
  \item $M\cap \partial B$ is contained in the closure of the union of at most one of the three vertical rectangles and at most one of the two horizontal triangles that appear in the boundary of $B$.
\end{itemize}
\end{defn}

\begin{thm} \label{thm:kirchheimprisms}
 Let $M$ be an outer approximation for $K^{rc}$ and $B$ be a $2+1$-prism for $(M,K)$.

 Then $K^{rc}\subset M\setminus B$.

 Furthermore, if $M$ is $2+1$-convex, then $M\setminus B$ is also $2+1$-convex.
\end{thm}
\begin{proof}
  It follows from theorem \ref{thm:kirchheimballs} and $B\cap K=\emptyset $ that
\[
K^{rc}\cap B=\left[
(B\cap K)\cup (\partial B\cap K^{rc})\right]^{rc}\cap B =
 (\partial B\cap K^{rc})^{rc}\cap B
\]
Since $\partial B\cap K^{rc}\subset\partial B\cap M$ is contained in the union of a closed vertical rectangle and a closed horizontal triangle, which is a $2+1$-convex set, it follows that
\[
K^{rc}\cap B=
 (\partial B\cap K^{rc})^{rc}\cap B \subset
 (\partial B\cap M) \cap B\subset
 \partial B\cap B
\]

Since $B$ is open and $K^{rc}\subset M$, it follows that $K^{rc}\subset M\setminus B$.

If $M$ is $2+1$-convex, then $(M\setminus B)^{rc}\subset M^{rc}=M$.
Thus $M$ is an outer approximation for $(M\setminus B)^{rc}$, and $B$ is a $2+1$-prism for $(M,M\setminus B)$.
We have just proved that this implies that $(M\setminus B)^{rc}\subset M\setminus B$, hence $M\setminus B$ is $2+1$-convex.
\end{proof}

We will iteratively extract $2+1$-prisms from an initial $2+1$ complex $M_0$ that contains $K^{rc}$.
This produces a sequence of outer approximations $M_n$ that often estabilizes at a certain number $n^*$ of iterations, such that $M_{n^*}$ is a $2+1$ $K$-complex.
If we can achieve this, then lemma \ref{lem: A contenido 2+1 ch Extr A} proves that we have reached a set that is both an inner and an outer approximation to $K^{rc}$:
$$
M_{n^*}
\underbrace{\subset}_{2.6}
\left(
Extr(M_{n^*})
\right)^{rc}
\underbrace{\subset}_{M_{n^*}\text{is 2+1-$K$ complex}}
K^{rc}
\underbrace{\subset}_{\text{outer approximation}}
M_{n^*}
$$

In example \ref{example: spiral staircase}, we showed that shovels leave a stranded triangle (with vertices $3$, $4$ and $Q$ in figure \ref{figure: snake set}).
The stranded triangle can be removed, see figure \ref{figure: snake set triangle}, using a $D$-prism $P=T\times I$ with a triangle $T$ with vertices $q_1=(1/2+\varepsilon,1/2)$, $q_2=(0,0-\varepsilon)$ and $q_3=(0,1+\varepsilon)$ and an interval $I=(1-\varepsilon', 1+\varepsilon')$, for any $\varepsilon,\varepsilon'>0$ .
After removing that prism using theorem \ref{thm:kirchheimprisms}, we reach $S^{lc}$, so $S^{lc}=S^{rc}$.
If a point can be removed from a compact set with a shovel, it can also be removed by a $2+1$-prism, so $S^{rc}$ can be computed using outer approximations with $2+1$-prisms.

\begin{figure}[h!]
  \includegraphics[width=8cm]{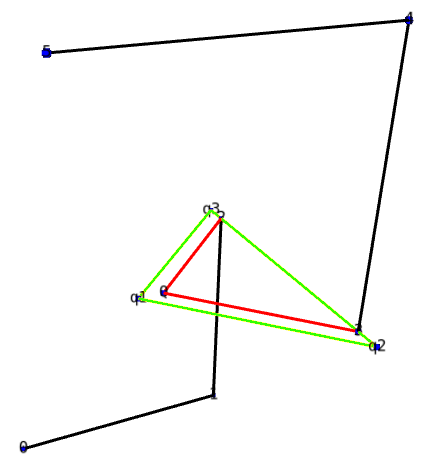}
  \caption{Spiral Staircase set with the triangle $T$ in green.}
  \label{figure: snake set triangle}
\end{figure}

However, for some finite sets $K$, no finite sequence of applications of theorem \ref{thm:kirchheimprisms} will compute the $2+1$ rank one convex hull.
We will study later examples \ref{example: infinitely many balls for one step of outer sequence} and \ref{example: unikorn6}, which have this property.

\begin{notation}
  Let $\pi:\R^3\rightarrow \R^2$ be the vertical projection onto the plane $\{z=0\}$, and
  let $\sigma:\R^3\rightarrow \R$ be the horizontal projection onto the line $\{x=y=0\}$.
\end{notation}

\begin{defn}
  Let $K\subset \mathbb{R}^3$ be a finite set.
  The crude outer approximation $C(K)$ to $K^{rc}$ is:
  $$C(K) = \co(\pi(K))\times\co(\sigma(K)),$$
  where $\co$ is the usual convex hull.
\end{defn}

\begin{defn}\label{defn: local 2+1 prism outer approximation}
  The local $2+1$-prism convex hull $K^{\pc}$ is the complement of the set of points that can be removed from $C(K)$ by a finite sequence of applications of Theorem \ref{thm:kirchheimprisms}.

$$
p\notin K^{\pc}\Leftrightarrow
\begin{array}{c}
\exists N\in\mathbb{N}, D_1,\dots,D_N\\
\text{ such that }\\
 D_j \text{ is a } 2+1 \text{-prism for} \left(C(K)\setminus\bigcup_{i=1}^{j-1} D_i,K\right) \text{ for } j=1,\dots,N\\
 \text{ and } p\in D_N
\end{array}
$$

\end{defn}
\begin{rem}
Any outer approximation that is bounded can be used instead of $C(K)$.
The usual convex hull $\co(K)$ works too, and we just used $C(K)$ because it is the start point of our algorithm.
\end{rem}

\section{A limit of outer approximations}\label{section: limit of outer approximations}
If we apply theorem \ref{thm:kirchheimprisms} iteratively, it is straightforward to build a sequence of outer approximations to $K^{rc}$.
Since this is a sequence of nested compact sets, it will have a limit.
However, if the outer approximations are not taken carefully, the limit set may not be $K^{rc}$.
It is easy to build examples of this, by using $2+1$-prisms that cut smaller and smaller portions of the outer approximation.

In this section, we prove that we can start with the crude outer approximation, and apply theorem \ref{thm:kirchheimprisms} in an specific way, in order to build a sequence of outer approximations $M_k$ that will converge to $K^{rc}$, and we will also be able to prove that the limit is a $2+1$ $K$-complex.
This will prove theorem \ref{thm: maintheorem1}.

In parallel, we will create a sequence of finite sets $A_k$ that are related to the outer approximations $M_k$ through the operator $\hv$, defined in \ref{defn: hv convex hulls}. We will genereate the sequence $A_k$ by iteratively applying the operator $\rah_K$, which is defined in \ref{defn: the kebab sequence}.
The advantage of using the sequence $A_k$ and the operator $\rah_K$ is that is can be coded into an efficient computer program.

\begin{defn}  \label{defn: h cuts}
Let $M\subset \R^3$, and let $h\in \R$.

The \textbf{$h$-cut} of $M$, $M^h$, is the projection of $M\cap \{z=h\}$ onto $\{z=0\}$.
$$
M^{h}=\pi(M\cap \{z=h\})
$$
\end{defn}

\begin{rem}
  A subset of $\R^3$ can be specified by giving its $h$-cuts for all $h$.
\end{rem}

\begin{defn}\label{defn: hv convex hulls}
  Let $ A\subset \R^3$ be a finite set, and let $H=\{h_1<\dots < h_n\}=\sigma(A)$ be the set of $z$-coordinates of all the points in $A$.

The \textbf{hv-convex hull}  of $A$, $\hv(A)$, is the set that results from taking the convex hull only in horizontal planes, and then taking convex hulls in the vertical direction in order to fill only the intermediate heights. It can be defined by its $h$-cuts:
\begin{itemize}
  \item
  For $h_j\in H$:
$$
\left(\hv(A)
\right)^{h_j} = \co(A^{h_j}).$$

  \item
  For $h\in (h_j,h_{j+1})$:
$$
\left(\hv(A)\right)^{h}
= \co(A^{h_j})\cap \co(A^{h_{j+1}}).$$
  \item For $h\not\in [h_1,h_n]$, $\left(\hv(A)\right)^h$ is empty.
\end{itemize}
\end{defn}

This is not the same as taking the laminate convex hull for $\{z=0\}$ and then the laminate convex hull for the $z$-axis, since for constructing $\hv(A)$ vertical segments are only used in intermediate heights.

\begin{rem}
  $\hv(A)
  $ is clearly a subset of $A^{lc,2}$, the lamination convex hull obtained with only two iterations of the ``add rank one connections'' operation that builds $A^{lc,i+1}$ from $A^{lc,i}$ in definition \ref{defn: lamination convex hull}.
\end{rem}

\begin{lem}
\label{lem:ahvis2+1complex}
  $\hv(A)$ is a $2+1$-complex.
\end{lem}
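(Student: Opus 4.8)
The plan is to exhibit an explicit finite list $L$ of elements whose union is $A^{hv}$ and which satisfies all the boundary conditions of Definition \ref{def: 2+1 complejo}. Write $H=\{h_1<\dots<h_N\}$, and set $Q_i=\co(A\cap\{z=h_i\})$ and $P_j=\co(A\cap\{z=h_j\})\cap\co(A\cap\{z=h_{j+1}\})$; each is a (possibly degenerate) convex polygon lying in a horizontal plane, and $A^{hv}$ is the finite union of the closed sets $Q_i\times\{h_i\}$ and $P_j\times[h_j,h_{j+1}]$. In particular $A^{hv}$ is compact, hence closed, so only the combinatorial decomposition remains.

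First I would produce a single planar subdivision compatible with every horizontal slice at once. Projecting all the $Q_i$ and $P_j$ to $\R^2$ gives finitely many convex polygons; the arrangement of their edges partitions their union into finitely many convex cells, and triangulating each cell (using only the arrangement vertices, which include all polygon vertices and all pairwise edge intersections) yields a finite planar simplicial complex $\mathcal{T}$. By construction every projected $Q_i$ and every projected $P_j$ is a union of closed triangles of $\mathcal{T}$, and any two triangles meet along a common edge, a common vertex, or not at all.

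With $\mathcal{T}$ fixed, I would build $L$ as follows. For the horizontal part $A^h$: for each height $h_i$ and each triangle, edge and vertex of $\mathcal{T}$ contained in $\operatorname{proj}(Q_i)$, put the corresponding relatively open horizontal triangle, horizontal segment and point (all at height $h_i$) into $L$; their union is exactly $Q_i\times\{h_i\}$, the boundary of each triangle is three horizontal segments of $L$, and that of each segment two points of $L$. For each slab $[h_j,h_{j+1}]$ and each triangle $t\subset\operatorname{proj}(P_j)$ I add the open solid $\operatorname{int}(t)\times(h_j,h_{j+1})$, the vertical rectangle $\operatorname{int}(e)\times(h_j,h_{j+1})$ over each edge $e$ of $t$, and the vertical segment $\{v\}\times(h_j,h_{j+1})$ over each vertex $v$ of $t$. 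The four sides of such a rectangle are the horizontal segments $e\times\{h_j\}$ and $e\times\{h_{j+1}\}$ together with the two vertical segments over the endpoints of $e$, exactly two of them parallel to the $z$-axis as required, while the border of the open solid is its two triangular caps plus its three side rectangles. The crucial point is that, since $P_j\subset Q_j$ and $P_j\subset Q_{j+1}$, every $t\subset\operatorname{proj}(P_j)$ also lies in $\operatorname{proj}(Q_j)$ and $\operatorname{proj}(Q_{j+1})$, so its caps $t\times\{h_j\}$ and $t\times\{h_{j+1}\}$, with all their edges and vertices, are already the horizontal elements placed in $L$. Hence every boundary element demanded by Definition \ref{def: 2+1 complejo} genuinely belongs to $L$, and the union of all listed elements is precisely $A^{hv}$.

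The routine but unavoidable obstacle is bookkeeping of the degenerate strata: a slice $Q_i$ or a cross-section $P_j$ may be a segment or a single point, in which case the corresponding prism collapses to a vertical rectangle or a vertical segment and must be classified as such, and one must check that no spurious element of positive dimension is created. This is exactly where the compatibility built into $\mathcal{T}$ pays off: because the same triangulation is used at the two heights bounding each slab, the caps of every solid, rectangle and segment are automatically present in $L$, so the verification reduces to a finite, case-by-case inspection rather than a genuine difficulty.
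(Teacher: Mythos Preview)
Your proof is correct and follows essentially the same approach as the paper: build a common planar triangulation compatible with all the horizontal slices and their pairwise intersections, then realise $A^{hv}$ as the horizontal triangles at each height together with the triangular prisms (broken into open solids, vertical rectangles, and vertical segments) over each slab. The paper's proof merely sketches this in three lines (``triangulate, refine so the intersections are unions of triangles, take prisms''), whereas you spell out the arrangement construction and the boundary-matching verification explicitly; the underlying idea is identical.
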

\begin{proof}
  $\hv(A)$ can be described as the union of one polygon at each height in $H$, which can be decomposed into triangles, and, for each $1\leq j<n$, one prism of the form $P=C\times (h_j,h_{j+1})$ for a finite 2D polygon $C$, and each prism $P$ can also be decomposed into a finite list of elements.
\end{proof}

\begin{defn}\label{defn:grid}
    A $(2+1)$ grid $G\subset\R^3$ (or a \textbf{grid} for short) is the product $F\times H$ of finite subsets $F\subset\R^2$ and $H\subset\R$.

    The \textbf{grid associated to a finite set $K\subset \R^n$} is the product $G=F\times H$ of the projection $F=\pi(K)$ of $K$ to the horizontal plane with the projection $H=\sigma(K)$ of $K$ to the vertical line.
\end{defn}

\begin{defn}\label{defn: the kebab sequence}
	{%
	Let $K\subset\R^3$ be a finite set , and let $H=\{h_1<\dots < h_n\}=\sigma(K)$ be the set of $z$-coordinates of all the points in $K$.

  Let $ A\subset \R^3$ be a finite set such that all $\sigma(A)\subset\sigma(K)$, and also $K\subset \hv(A)$.

  Then, for $h_j\in H$, the \textbf{($2+1$, $K$) reduction at height $h_j$} of the set $A$ is a finite set $A'=\rah_K(A,h_j)$ where only the level $h_j$ is altered:
}
  \begin{enumerate}
    \item The set
    $$\co \left(\left(\co (A^{h_{j+1}})\cap \co(A^{h_{j-1}})\right) \cup K^{h_j}\right)$$
    is a polygon in $\mathbb{R}^2$. Let $F$ be the set of its extremal points.
    Then $(A')^{h_j}=F$.

    {%
    If $j=1$ or $j=n$, then $A^{h_0}$ or $A^{h_{j_{n+1}}}$ are replaced by the empty set, and hence $(A')^{h_1}$ is the set of extremal points of $\co(K^{h_1})$.
    Similarly, $(A')^{h_n}$ is the set of extremal points of $\co(K^{h_n})$.
    It is not difficult to prove that $\co(K^{h_1})=(K^{rc})^{h_1}$ and $\co(K^{h_n})=(K^{rc})^{h_n}$, although this is not essential to the argument.}

    \item For any $h'\neq h_j$: $(A')^{h'}=A^{h'}$.
  \end{enumerate}

  The \textbf{discrete outer sequence} associated to a finite set $K$ is the sequence of finite sets $A_k$:
  \begin{enumerate}
    \item $A_0= G$, for the grid $G$ associated to $K$.
    \item $A_{k+1}$ is obtained from $A_k$ by performing one \textbf{($2+1$, $K$) reduction at height $h_j$} for each $h_j\in H$.
    All the reductions are performed simultaneously from $A_k$:
{%
	$$
	\left(
	A_{k+1}
	\right)^{h_j}=Extr
	\left(
	\co\left(\left(
	\co(
	A_k^{h_{j-1}})\cap
	\co(A_k^{h_{j+1}})\right)
	\cup K^{h_j}
	\right)
	\right)
	$$
}
  \end{enumerate}

  Associated to the outer sequence we also define the \textbf{outer sequence} associated to $K$, consisting of the sets $M_k=\hv(A_k)$.

  We say that the outer sequence is \emph{finite} if $A_k=A_{k+1}$ for some $k\in\mathbb{N}$.
  It follows then that $A_k=A_j$ for any $j\geq k$.
\end{defn}

Figure \ref{fig: outer sequence for the spiral staircase} shows the outer sequence for the set $S$ in example \ref{example: spiral staircase}.
The outer sequence for $S$ is finite.
\begin{figure}[!ht]
  \begin{tabular}{ccc}
    \includegraphics[width=3cm]{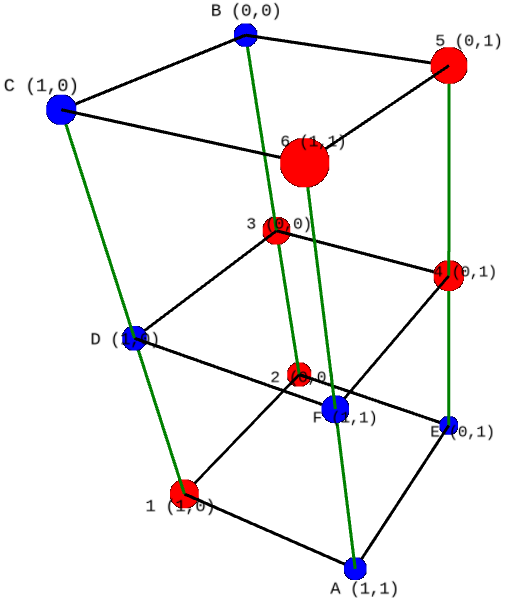} &
    \includegraphics[width=3cm]{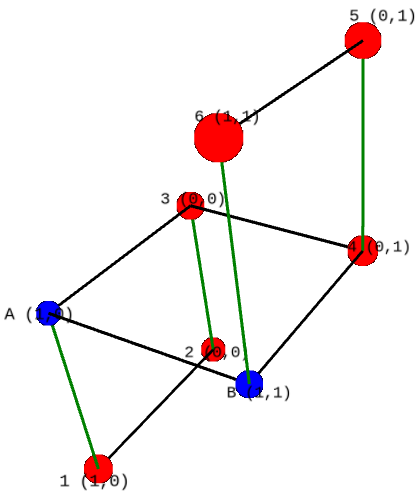} &
    \includegraphics[width=3cm]{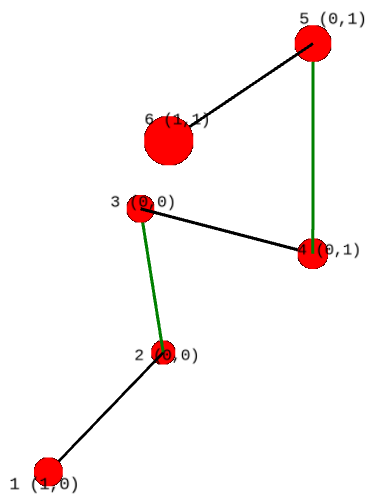}
  \end{tabular}
  \caption{$S$ consists of the six red points, while the other points in the grid associated to $S$ are shown in blue. The outer sequence converges in three steps, and no additional points are added to the grid.}
  \label{fig: outer sequence for the spiral staircase}
\end{figure}

By lemma \ref{lem:ahvis2+1complex} each step of the outer sequence goes from a $2+1$ complex $M_k$ to another $2+1$ complex $M_{k+1}$.
We will prove in lemma \ref{lem:krc is contained in the reduction at height} that $M_{k+1}$ is carved from $M_k$ by many applications of theorem \ref{thm:kirchheimprisms}, and it will follow that all the sets in the outer sequence are $2+1$-convex.

\begin{example}\label{example: infinitely many balls for one step of outer sequence}
  We will see in a specific example how $M_{k+1}$ is carved from $M_k$.
  This will require an infinite number of applications of theorem \ref{thm:kirchheimprisms}.
  For the sake of simplicity, let $k=1$, let $A$ be a set whose points take only
  three different values for their $z$-coordinate: $h_1<h_2<h_3$, and let $M=M_1=\hv(A)$.
  A $2+1$ reduction is performed at height $h_2$ (see figure \ref{fig: 2+1 reduction}):
\begin{itemize}
  \item $M^{h_1}$ is a convex polygon whose vertices are $i_1,i_2,i_3,p_-$.
  \item $M^{h_3}$ is a convex polygon whose vertices are $i_1,p^+,i_2,i_3$.
  \item $K^{h_2}$ consists of the two points $k_1,k_2$.
  \item $M^{h_1}\cap M^{h_3}$ is a convex polygon whose vertices are $i_1,i_2,i_3$.
  \item
  $M^{h_2}$ is the convex hull of $M^{h_1}\cup M^{h_3}\cup K^{h_{2}}$, and it is a convex polygon whose vertices are $i_2,i_3,k_1,p_-,p_+,k_2$.
  \item
  After a $2+1$ reduction at height $h_2$, a new $2+1$ complex $\widetilde{M}$ is reached, where
  $(\widetilde{M})^{h_2}$ is the convex hull of $\left(M^{h_1}\cap M^{h_3}\right) \cup K^{h_{j}}$, a convex polygon whose vertices are $k_1,k_2,i_2,i_3$.
\end{itemize}

\begin{figure}[!ht]
  \centering
  \begin{tabular}{cc}
    \includegraphics[width=0.45\textwidth]{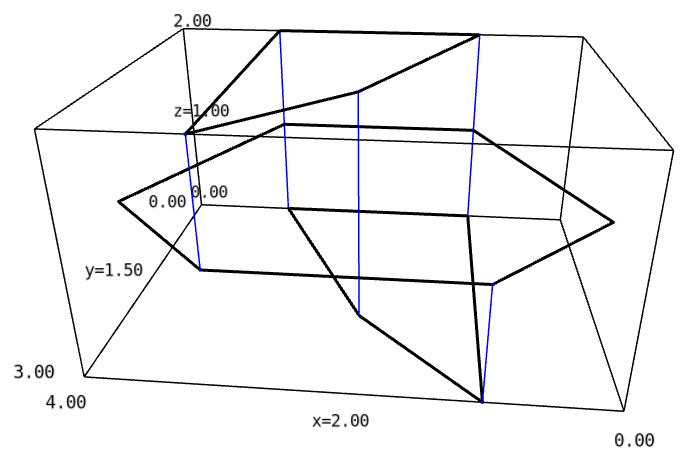} &
    \includegraphics[width=0.45\textwidth]{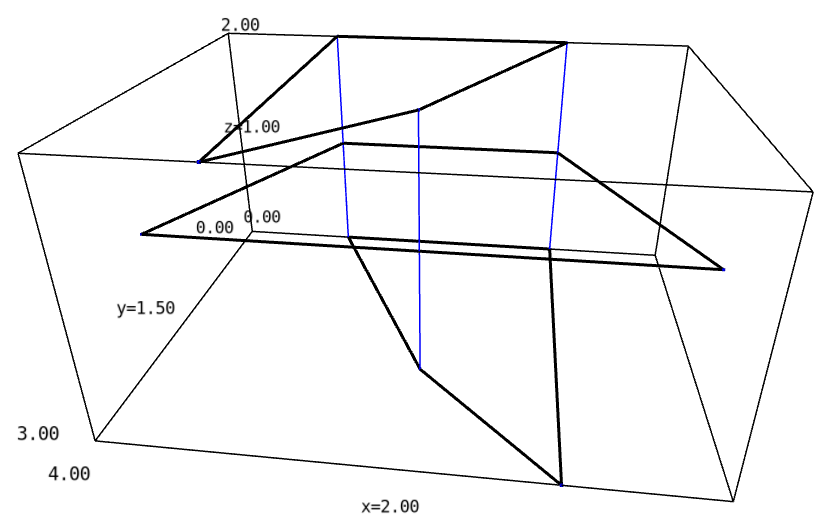}
  \end{tabular}
    \includegraphics[width=8cm]{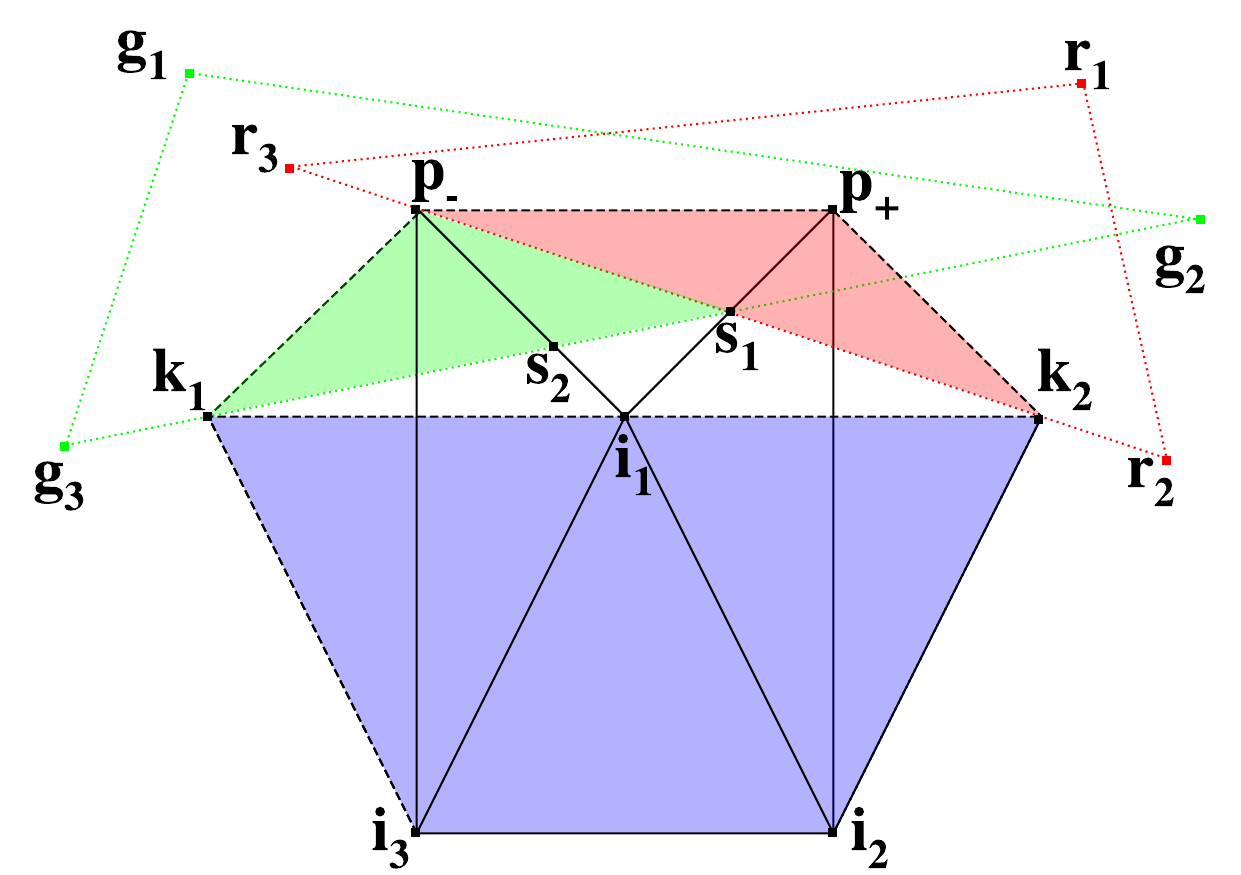}
  \caption{An example in which the $2+1$ reduction requires infinitely many applications of theorem \ref{thm:kirchheimprisms}.
  Top left: $M_k$; top right: $M_{k+1}$;
  bottom: projection to $\mathbb{R}^2$, as explained in the text.
  }
  \label{fig: 2+1 reduction}
\end{figure}

We first use prisms $P_\varepsilon=T\times(h_1+\varepsilon,h_3)$, where the open triangle $T$ has vertices $r_1,r_2,r_3$ and $\varepsilon$ is positive and smaller than $h_2-h_1$.
$P_\varepsilon$ is an open set that does not contain points of $K$, and
$\partial P_\varepsilon$ only intersects $M$ at its \emph{top} triangle
$T\times\{z=h_3\}$, and the vertical rectangle $\overline{r_2 r_3}\times
(h_1+\varepsilon,h_3)$, hence $P_\varepsilon$ is a $2+1$-prism for $(M,K)$, and by theorem \ref{thm:kirchheimprisms} it does not intersect $K^{rc}$.
Let $R$ be the triangle with vertices $k_2,p_+,p_-$ (the region shaded in red in figure \ref{fig: 2+1 reduction}).
Since $\varepsilon>0$ is arbitrary, $M'=M\setminus (R\times(h_1,h_3))$ also contains $K^{rc}$.

Only after theorem \ref{thm:kirchheimprisms} has been used with all the prisms $P_\varepsilon$, it is possible to use the prisms $P'_\varepsilon=T'\times(h_1,h_3-\varepsilon)$, where the open triangle $T'$ has vertices $g_1,g_2,g_3$ and $\varepsilon$ is positive and smaller than $h_3-h_2$.
The segment $\overline{g_2 g_3}$ contains the points $k_1$ and $s_1$, which is the intersection of $\overline{r_2 r_3}$ and $\overline{i_1 p_+}$.
$P'_\varepsilon$ does not contain points of $K$, and only intersects $M'$ at its \emph{bottom} triangle $T'\times\{z=h_{1}\}$, and the vertical rectangle $\overline{g_2 g_3}\times (h_1,h_3-\varepsilon)$, hence it does not intersect $K^{rc}$.
Let $G$ be the triangle with vertices $k_1,s_1,p_-$ the region shaded in green in figure \ref{fig: 2+1 reduction}.
Thus, for any $\varepsilon>0$, $P'_\varepsilon$ is a $2+1$-prism for $(M',K)$, and $M''=M'\setminus (G\times(h_1,h_3))$ also contains $K^{rc}$.

This iteration can continue, pivoting on points $s_1,s_2,s_3,\dots$ in the segments $\overline{i_1p_-}$ and $\overline{i_1p_+}$ that converge to $i_1$, and the remaining region of $\{z=h_2\}$ converges to the blue region, which is $\widetilde{M}^{h_2}=\co \left( \left(M^{h_1}\cap M^{h_3}\right) \cup K^{h_{2}}\right)$.
For intermediate heights $h\in(h_1,h_{2})$, the limit set is $\widetilde{M}^{h}=\widetilde{M}^{h_1}\cap \widetilde{M}^{h_2}$, and
for intermediate heights $h\in(h_{2},h_3)$, the limit set is $\widetilde{M}^{h}=\widetilde{M}^{h_{2}}\cap \widetilde{M}^{h_3}$.
The other heights $h\not\in(h_1,h_3)$ are not altered.
\end{example}

\begin{lem}\label{lem:krc is contained in the reduction at height}
Let $A$ be a finite set such that $\hv(A)$ is $2+1$-convex, $K$ be such that $K^{rc}\subset\hv(A)$, and let $h_j\in H=\sigma(A)$ be the height of some point of $A$.

Then $K^{rc}\subset \hv\left(
\rah_K(A,h_j)
\right)$, and $\hv\left(\rah_K(A,h_j)\right)$ is $2+1$-convex.
\end{lem}

\begin{proof}

The goal is to prove that $M_j=\hv\left(\rah_K(A,h_j)\right)$ is a subset of $M=\hv(A)$ obtained only by applications of theorem \ref{thm:kirchheimprisms} with $2+1$-prims that do not intersect $K$, which quickly proves the lemma.

In order to reach $M_j$ from $M$, theorem \ref{thm:kirchheimprisms} must be applied several times in a row.

We consider all prisms of the following types:
\begin{itemize}
  \item $P_{up}=T\times (h_{j-1}+\varepsilon,h_{j+1})$, where
    $0<\varepsilon<h_j-h_{j-1}$, and the vertices $q_1, q_2, q_3$ of $T$ satisfy
  \begin{itemize}
    \item $\overline{q_1q_3}\cap \pi(M)=\overline{q_1q_2}\cap \pi(M)=\emptyset$
    \item $T\cap M^{h_{j+1}}=\emptyset$
    \item $T\cap K^{h_{j}}=\emptyset$
  \end{itemize}

  \item $P_{down}=T\times (h_{j-1},h_{j+1}-\varepsilon)$, where
    $0<\varepsilon<h_{j+1}-h_j$, and the vertices $q_1, q_2, q_3$ of $T$ satisfy
  \begin{itemize}
    \item $\overline{q_1q_3}\cap \pi(M)=\overline{q_1q_2}\cap \pi(M)=\emptyset$
    \item $T\cap M^{h_{j-1}}=\emptyset$
    \item $T\cap K^{h_{j}}=\emptyset$
  \end{itemize}
\end{itemize}

We define $\widetilde{M}_j$ to be the complement of the set of points that can be removed from
$M$ by a finite sequence of applications of theorem \ref{thm:kirchheimprisms} with prisms of the above two types:

$$
p\notin \widetilde{M}_j\Leftrightarrow
\begin{array}{c}
\exists N\in\mathbb{N}, P_1,\dots,P_N \text{ such that }\\
 P_j \text{ is a prism of type } P_{up} \text{ or } P_{down}  \text{ above, for } j=1,\dots,N\\
 P_j \text{ is a } 2+1 \text{-prism for} \left(M\setminus\bigcup_{i=1}^{j-1} P_i,K\right) \text{ for } j=1,\dots,N\\
 \text{ and } p\in P_N
\end{array}
$$
By theorem \ref{thm:kirchheimprisms}, $\widetilde{M}_j$ is the intersection of a
set of $2+1$-convex sets, hence $\widetilde{M}_j$ is $2+1$-convex.
Since it contains $K$, it also contains $K^{rc}$.

We claim that  $\widetilde{M}_j=M_j$.
We divide the proof in three parts:

 \textbf{1. Proof that $\left( \widetilde{M}_j\right)^{h_j}\supset \left( M_j\right)^{h_j} = \co \left((M^{h_{j-1}}\cap M^{h_{j+1}})\cup K^{h_j}\right)$}.

Since we only use prisms that are contained in $\{h_{j-1}<z<h_{j+1}\}$, $ \widetilde{M}_j$ contains $M\cap\{z=h_{j-1}\}$ and $M\cap\{z=h_{j+1}\}$.
Since $\widetilde{M}_j$ is $2+1$-convex, it also contains the laminate convex hull of $M\cap\{z=h_{j-1}\}\cup M\cap\{z=h_{j+1}\}$, which contains $(M^{h_{j-1}}\cap M^{h_{j+1}})\times (h_{j-1},h_{j+1})$.
Since $\widetilde{M}_j$ contains $K^{h_j}$ and is $2+1$-convex, it follows that $\widetilde{M}_j$ contains the laminate convex hull of $\left((M^{h_{j-1}}\cap M^{h_{j+1}})\cup K^{h_j}\right)\times\{z=h_j\}$, which is $\co \left((M^{h_{j-1}}\cap M^{h_{j+1}})\cup K^{h_j}\right)\times\{z=h_j\}$.

\textbf{2. Proof that $\left( \widetilde{M}_j\right)^{h_j}\subset \left( M_j\right)^{h_j}$}

It follows from the definition \ref{defn: hv convex hulls} of $\hv$-convex hulls that $M^{h}=M^{h_{j-1}}\cap M^{h_{j}}\subset M^{h_{j-1}}$ for any height $h\in(h_{j-1},h_j)$.
Since we can use any prism $P_{up}$ with a smaller $\varepsilon$, so that
$0<\varepsilon<h_j-h_{j-1}$, every point $(x,y)$ that can be removed at height
$h_j$ can also be removed at height $h$, so that $(\widetilde{M}_j)^h\subset
M^{h_{j-1}}\cap(\widetilde{M}_j)^{h_{j}} $, for $h\in(h_{j-1},h_{j})$.
Similarly, $(\widetilde{M}_j)^h\subset M^{h_{j-1}}\cap(\widetilde{M}_j)^{h_{j}} $, for $h\in(h_{j},h_{j+1})$.

Let $p$ be an extremal point of the convex set $\left(\widetilde{M}_j\right)^{h_j}$, and assume it does not belong to $ \co \left((M^{h_{j-1}}\cap M^{h_{j+1}})\cup K^{h_j}\right)$.
We can assume, without loss of generality, that $p\not\in M^{h_{j-1}}$, since otherwise, $p\not\in M^{h_{j+1}}$ and the argument is similar.

We claim that $p$ does not belong to
$$R= \co \left(\left(M^{h_{j-1}}\cap \left(\widetilde{M}_j\right)^{h_j}\right)\cup K^{h_j}\right).$$
The point $p$ does not belong to neither $K^{h_j}$ nor $M^{h_{j-1}}$, so if $p\in R$, it belongs to the interior of a segment $\overline{q_1 q_2}$, where $q_1,q_2\in\left(M^{h_{j-1}}\cap \left(\widetilde{M}_j\right)^{h_j}\right)\cup K^{h_j}$.
But since $K^{h_j}\subset \left(\widetilde{M}_j\right)^{h_j}$ by part 1, it follows that
$$\co \left(\left(M^{h_{j-1}}\cap \left(\widetilde{M}_j\right)^{h_j}\right)\cup K^{h_j}\right)\subset \left(\widetilde{M}_j\right)^{h_j},$$
and then $p$ would not be an extremal point of $\left(\widetilde{M}_j\right)^{h_j}$.

Thus $p$ can be separated from $R$ by a line $L$.
In this line, we can find two points $q_2,q_3$, plus another point $q_1$ that lies to the same side of $L$ as $p$, so that $p$ belongs to the interior of the triangle with vertices $q_1,q_2,q_3$, and $\overline{q_1q_2}$ and $\overline{q_1q_3}$ do not intersect $\pi(M)$.
We build the prism $P_{up}=T\times(h_{j-1}+\varepsilon, h_{j+1})$.
Its bottom triangle does not intersect $\left( \widetilde{M}_j\right)^{h_{j-1}+\varepsilon}\subset M^{h_{j-1}}
\cap \left(\widetilde{M}_j\right)^{h_j}
\subset R$.

Thus $P_{up}$ intersects $\widetilde{M}_j$ only at its top triangle, and the vertical rectangle $\overline{q_2q_3}\times(h_{j-1}+\varepsilon, h_{j+1})$.
Furthermore, since $R$ contains $K^{h_j}$, $P_{up}$ does not contain points of $K$, hence $P_{up}$ is a  $(\widetilde{M}_j,K)$ prism.
This implies that $p$ can be removed from $\widetilde{M}_j$ by one of the prisms that define $\widetilde{M}_j$, which is a contradiction.
Since all extremal points of $\left(\widetilde{M}_j\right)^{h_j}$ belong to $\left( M_j\right)^{h_j}$, we deduce that $\left(\widetilde{M}_j\right)^{h_j}\subset \left( M_j\right)^{h_j}$, since both sets are convex.

\textbf{3. Proof that $\left( \widetilde{M}_j\right)^{h}= \left( M_j\right)^{h}$ for every $h$.}

We have established that $(\widetilde{M}_j)^{h_j}=(M_j)^{h_j}$.
By the nature of the prisms we have used, it is clear that $(\widetilde{M}_j)^{h}=(M_j)^{h}$ for any $h\leq h_{j-1}$ and for any $h\geq h_{j+1}$.
We proved in the previous part that $(\widetilde{M}_j)^h\subset (\widetilde{M}_j)^{h_{j}}\cap M^{h_{j-1}}$, and we have also proved that $(\widetilde{M}_j)^{h_j} = (M_j)^{h_j}$.
Since $M_j$ is $2+1$-convex, it must contain $\left((M_j)^{h_j}\cap (M_j)^{h_{j-1}}\right)\times(h_{j-1},h_j)$, and hence they agree.
The argument for $h\in (h_{j},h_{j+1})$ is similar, and we have established that $(M_j)^h=(\widetilde{M}_j)^{h}$ for any $h$.

This concludes the proof that $M_j$ is obtained from $M$ only by applications of theorem \ref{thm:kirchheimprisms}, from which it follows that $M_j$ is $2+1$-convex, and since no point of $K$ was removed it follows that $K^{rc}\subset M_j$.
\end{proof}

\begin{thm}\label{thm: the kebab sequence consists of outer approximations}
  Each $M_k$ in the outer sequence of a finite set $K$ is a $2+1$ convex set that contains $K^{rc}$.
\end{thm}
\begin{proof}
  The result is proved by induction on $k$.
  We assume that $M=M_k=\hv(A_k)$ is a $2+1$-convex set that contains $K^{rc}$, and we must prove that $M_{k+1}$ also is.

  Let $M_j=\hv(\rah_K(A_k,h_j))$ and $M'=\hv(A_{k+1})$.
  We will prove that $M' =\cap_{j=1}^{n} M_j$.

  Let us describe $(M_j)^h$ for any $h$:
  \begin{itemize}
    \item $(M_j)^{h_j}=\co \left((M^{h_{j-1}}\cap M^{h_{j+1}})\cup K^{h_j}\right)$.
    \item $(M_j)^{h}= \co \left((M^{h_{j-1}}\cap M^{h_{j+1}})\cup K^{h_j}\right)\cap M^{h_{j-1}}$ for $h\in(h_{j-1}, h_j)$.
    \item $(M_j)^{h}= \co \left((M^{h_{j-1}}\cap M^{h_{j+1}})\cup K^{h_j}\right)\cap M^{h_{j+1}}$ for $h\in(h_{j}, h_{j+1})$.
    \item $(M_j)^{h}= M^{h}$ for $h\not\in(h_{j-1}, h_{j+1})$.
  \end{itemize}
  From this description we derive the following description of $\cap_{j=1}^{n}M_j$ for any $h$:
  \begin{itemize}
    \item $\left(\cap_{j=1}^{n}M_j\right)^{h_j}=\co \left((M^{h_{j-1}}\cap M^{h_{j+1}})\cup K^{h_j}\right)=(M')^{h_{j}}$ for $h_j\in H$.
    \item $\left(\cap_{j=1}^{n}M_j\right)^{h}= (M')^{h_{j-1}}\cap M^{h_{j-1}}\cap (M')^{h_j}\cap M^{h_{j}}=(M')^{h_{j-1}}\cap (M')^{h_j}$ for $h\in(h_{j-1}, h_j)$.
  \end{itemize}

  It follows from definition \ref{defn: the kebab sequence} that $M'=\cap_{j=1}^{n}M_j$.

Now, using lemma \ref{lem:krc is contained in the reduction at height} we have
$$
K^{rc}\subset \cap_j\hv(\rah_K(A_k,h_j)) = \hv(A_{k+1})=M'
$$
and $M'$ is a $2+1$-convex set.

\end{proof}
 
\begin{lem}
  If the outer sequence is finite and stops at $M_{n^*}$, then $M_{n^*}=K^{rc}$.
\end{lem}
\begin{proof}
  Let $p$ be an extremal point of $M_{n^*}$ with height $h_j$.
  Then $\pi(p)\not\in (M_{n^*})^{h_{j-1}}\cap (M_{n^*})^{h_{j+1}}$, since otherwise $p$ would belong to the interior of a vertical segment contained in $M_{n^*}$.
  Hence, if $p$ does not belong to $K$, then either $p$ will not belong to $M_{n^*+1}$ or it will belong to the interior of an horizontal segment in  $M_{n^*+1} = M_{n^*}$.
  Both of these options are contradictions.

  It follows that $M_{n^*}$ is a $2+1$ $K$-complex, and lemma \ref{lem: A contenido 2+1 ch Extr A} and theorem \ref{thm: the kebab sequence consists of outer approximations} prove that we have reached a set that is both an inner and an outer approximation to $K^{rc}$:
  $$
  M_{n^*}
  \underbrace{\subset}_{2.6}
  \left(
  Extr(M_{n^*})
  \right)^{rc}
  \underbrace{\subset}_{M_{n^*}\text{is 2+1-$K$ complex}}
  K^{rc}
  \underbrace{\subset}_{\ref{thm: the kebab sequence consists of outer approximations}}
  M_{n^*} = K^{\pc}
  $$
\end{proof}

In our next example, however, the outer sequence is not finite.

\begin{example}\label{example: unikorn6}
Let
$$
  K=
  \left\{
\begin{array}{rrr}
  P_1=(2,0,0), &P_2=(2,2,0), &P_3=(0,1,1), \\
  P_4=(3,1,2), &P_5=(1,0,3), &P_6=(1,2,3)
\end{array}
\right\}.
$$
$K$ consists of the red points in figure \ref{fig: unikorn6}.
The red segments, plus the vertical square that they delimit, constitute a $2+1$ $K$ complex, which we name $L$, whose extremal points are exactly the points of $K$.
We can describe $L$ as $\hv(B)$ for a set $B$ with $8$ points.
In order to write that description we define two more points: $Q_1=(2,1)$ is the intersection of the segment $\overline{\pi(P_1)\pi(P_2)}$ with the segment $\overline{\pi(P_3)\pi(P_4)}$, and $Q_2=(1,1)$ is the intersection of the segment $\overline{\pi(P_5)\pi(P_6)}$ with the segment $\overline{\pi(P_3)\pi(P_4)}$.
\begin{itemize}
  \item $B^0$ consists of $\pi(P_1)$ and $\pi(P_2)$.
  \item $B^1$ consists of $\pi(P_3)$ and $Q_1$.
  \item $B^2$ consists of $\pi(P_4)$ and $Q_2$.
  \item $B^3$ consists of $\pi(P_5)$ and $\pi(P_6)$.
\end{itemize}

  \begin{figure}[!ht]
    \begin{tabular}{cc}
      \includegraphics[width=5cm]{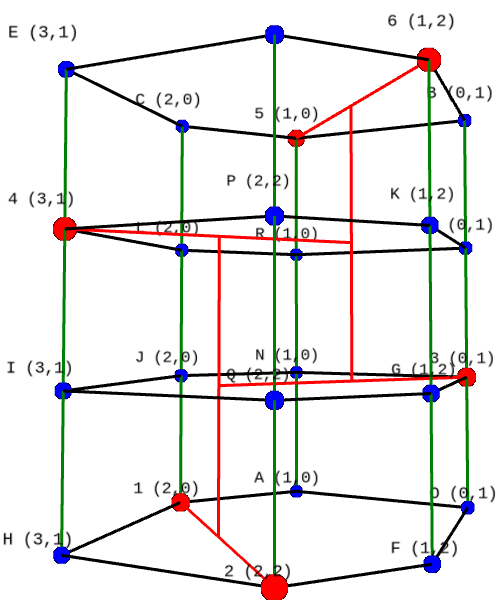} &
      \includegraphics[width=5cm]{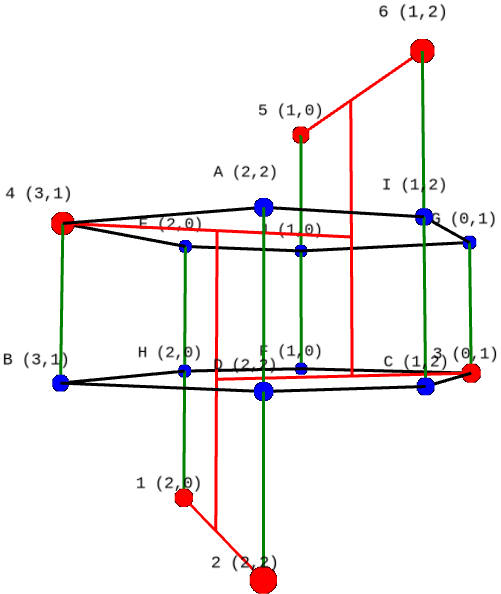} \\
      \includegraphics[width=5cm]{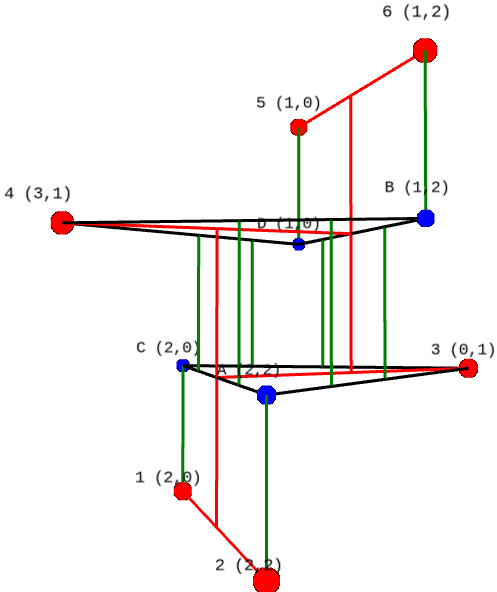} &
      \includegraphics[width=5cm]{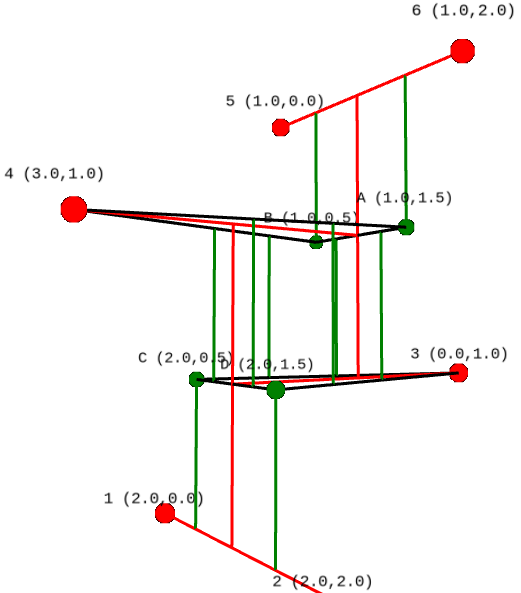}
    \end{tabular}
    \caption{Top left: initial crude approximation $M_0=\hv(G)$; top right: element $M_1$ of the outer sequence; bottom left: element $M_2$; bottom right: element $M_3$.
    $K$ consists of the six red points. $K^{rc}$ is shown as red lines. Blue points belong to the original grid. Green points are generated at step $3$, and are extremal in $M_3$, but they do not belong to $K$.}
    \label{fig: unikorn6}
  \end{figure}

By lemma \ref{lem: A contenido 2+1 ch Extr A}, $L$ is contained in $K^{rc}$.
After the first two iterations of the outer algorithm, $A_n$ retains the same configuration: the segment $\overline{P_5 P_6}$ at the top and the segment $\overline{P_1 P_2}$ at the bottom, and two triangles at the middle levels, but the vertices of those triangles change with $n$.

Let us look at the derivation of $M_3$ from $M_2$.
$M_2$ consists only of points of the original grid $G$, but four of them (labelled $A,B,C,D$ in figure \ref{fig: unikorn6}, bottom left) are extremal and do not belong to $K$.
The triangle at height $2$ has vertices $\pi(P_4)=(3,1), D=(1,0), B=(1,2)$.
The segment at height $0$ has endpoints $\pi(P_1)=(2,0)$ and $\pi(P_2)=(2,2)$.
The intersection of this segment and that triangle is the segment with endpoints $(2,0.5)$ and $(2,1.5)$.
Thus, the ($2+1$, $K$) reduction at height $1$ is $A'$ which differs from $A_2$ by the triangle at height $1$, which has vertices $(0,1)$, $(2,0.5)$ and $(2,1.5)$.
Analogously, the ($2+1$, $K$) reduction at height $2$ will alter only the triangle at height $2$, from one with vertices $(3,1), (1,0), (1,2)$ into one with vertices $(3,1), (1,1/2), (1,3/2)$.
Thus $A_3$ consists of the same segments $\overline{(2,0), (2,2)}$ at height $0$ and $\overline{(1,0), (1,2)}$ at height $2$, and two triangles in the middle levels.

  From step $2$ on, the top and bottom segments will remain unaltered, but the triangle in $M_{2+j}$ at height $1$ will have vertices $(0,1), (2,1-1/2^j), (2,1+1/2^j)$, and the triangle in $M_{2+j}$ at height $2$ will have vertices $(3,1), (1,1-1/2^j), (1,1+1/2^j)$, as the reader may verify by induction.

  Thus the outer sequence is not finite, but its limit is the inner approximation $L$.

  The passage from $M_{2+j}$ to $M_{2+j+1}$ consists of removing from $M_{2+j}$ all the $2+1$-prisms for $(M_{2+j},K)$.%
  In particular, a finite sequence of applications of theorem \ref{thm:kirchheimprisms} will not reach $K^{rc}$.
  Although clever application of theorem \ref{thm:kirchheimballs} directly can find $K^{rc}$ in finitely many steps, using sets $B$ more general than $2+1$-prims, it is not clear how to apply more general sets $B$ for arbitrary sets $K$.

\end{example}

\begin{thm}\label{thm: the limit is a finite complex}
  The outer sequence $\{M_n\}$ of a finite set $K$ converges to a $2+1$ $K$ complex.
\end{thm}
\begin{proof}
  Since $M_{n+1}\subset M_n$, and each $M_n$ is compact, the outer sequence is a sequence of nested compact sets, whose limit $M_\infty=\bigcap M_n$ is a compact set.
  Since each $M_n$ contains $K^{rc}$, $M_\infty$ also does.

  For any two heights $h,h'\in (h_j,h_{j+1})$, the sets $M_n^{h}$ and $M_n^{h'}$ are the same.
  Hence, for the limit set, it also holds that $M_\infty^{h} = M_\infty^{h'}$.

  For any $h\in (h_j,h_{j+1})$, we have $M_n^{h} = M_n^{h_j}\cap M_n^{h_{j+1}}$.
  It follows that $M_\infty^{h} = M_\infty^{h_j}\cap M_\infty^{h_{j+1}}$.

  \paragraph{\textbf{Claim:}}
   All extremal points of $M_\infty$ belong to $K$.

  Suppose on the contrary that $p=(x,y,h)\not\in K$ is an extremal point of $M_\infty$.

  Let $\bar{p}=\pi(p)=(x,y)$.

  Suppose first that $h$ lies in the interval $(h_j,h_{j+1})$.
  Since $M_\infty^{h} = M_\infty^{h'}$ for any other $h'\in (h_j,h_{j+1})$, the vertical segment $(x,y)\times (h_j,h_{j+1})$ contains $p$ and is contained in $M_\infty$.
  It follows that $p$ is not extremal.

  Suppose now that $h=h_j\in H$.
  Since $M_\infty^{h'}$ is the same set for any $h'\in (h_j,h_{j+1})$, we call this set $M_\infty^{h+}$. We define $M_\infty^{h-}$ similarly.
  Since $p$ is extremal, it follows that at least one of $M_\infty^{h+}$ and $M_\infty^{h-}$ do not contain $\bar{p}$.
  Without loss of generality, we assume $\bar{p}\not \in M_\infty^{h+}$.

  We claim that $\bar{p}\not\in \co \left(K^{h} \cup M_\infty^{h+} \right)$.
  Since $p\not\in K$ and $\bar{p}\not \in M_\infty^{h+}$, $\bar{p}\in \co \left(K^{h} \cup M_\infty^{h+} \right)$ implies that $\bar{p}$ lies in the interior of a segment with endpoints in $K^{h} \cup M_\infty^{h+}$.
  Since both $K^{h} $ and $M_\infty ^{h+}$ are subsets of $M_\infty^{h}$, this would also imply that $p$ is not extremal in $M_\infty$.

  Since $\bar{p}\not\in co \left(K^{h} \cup M_\infty^{h+} \right)$, we can separate $\bar{p}$ from $co \left(K^{h} \cup M_\infty^{h+} \right)$ with a line $L\subset \R^2$.

  Since $p$ is extremal, there is a point $q_1$ near $\bar{p}$ that does not belong to $ M_\infty^{h}$, and such that $q_1$ and $K^{h}$ lie at opposite sides of $L$.
  We also require that $q'$ is separated from $M_\infty^{h}$ by the line parallel to $L$ that goes through $\bar{p}$.
  Then we can find two points $q_2$ and $q_3$ in the line $L$ such that the segments $\overline{q_1 q_2}$ and $\overline{q_1 q_3}$ do not intersect $M_\infty^{h}$ (see figure \ref{fig: D prism that cuts p}).

  \begin{figure}[!ht]
      \includegraphics[width=8cm]{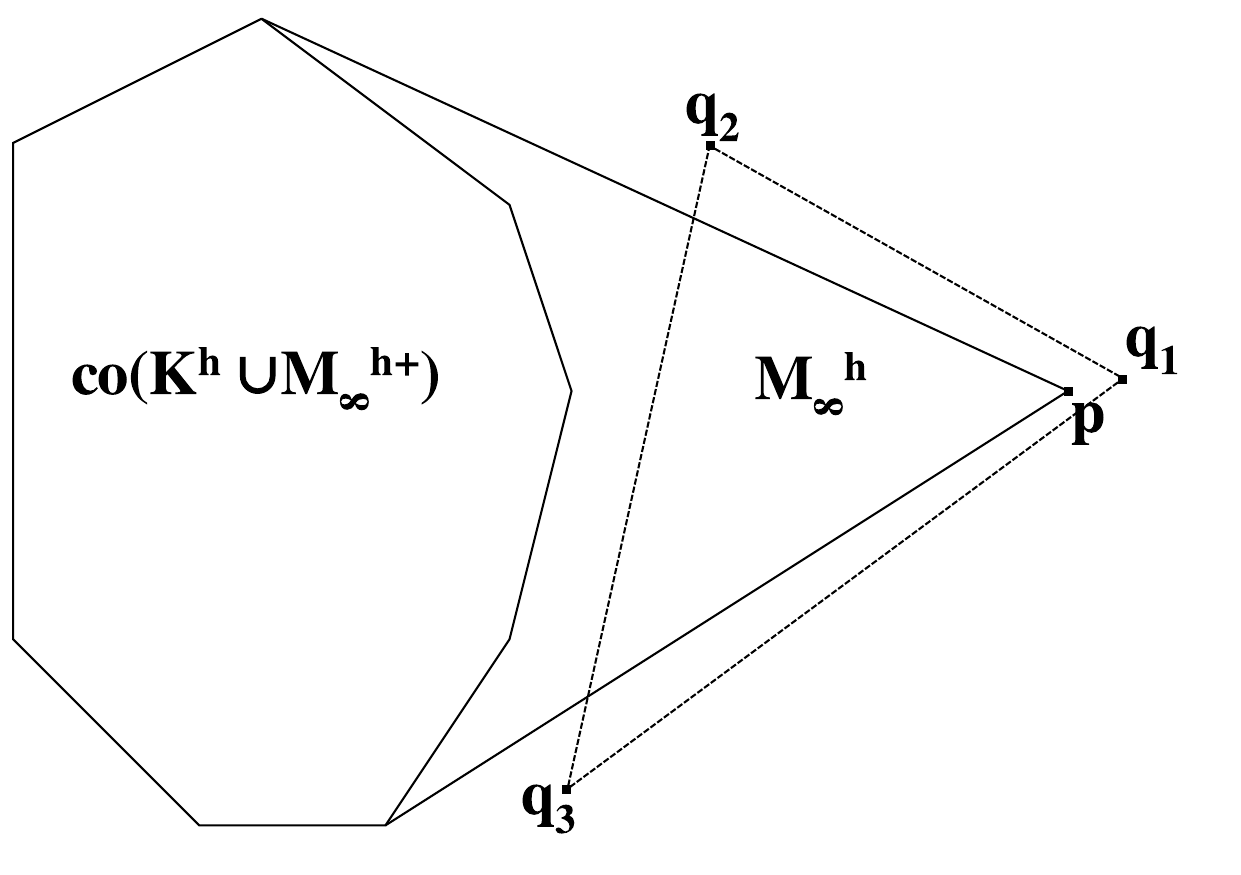}
      \caption{A $2+1$-prism for $(M,K)$ is used to remove a neighborhood of the point $p$}
      \label{fig: D prism that cuts p}
  \end{figure}

  Let $T$ be the triangle with vertices $q_1, q_2, q_3$.
  The product $P=T\times (h-\varepsilon, h+\varepsilon)$ is a  $2+1$-prism for $(M_\infty, K)$ because:
\begin{itemize}
  \item
  The closure of its vertical rectangles $\overline{q_1 q_2}\times(h-\varepsilon, h+\varepsilon)$ and $\overline{q_1 q_3}\times(h-\varepsilon, h+\varepsilon)$ do not intersect $ M_\infty^{h}\times(h-\varepsilon, h+\varepsilon)$, which contains both $ M_\infty^{h+}\times(h-\varepsilon, h+\varepsilon)$ and $M_\infty^{h-}\times(h-\varepsilon, h+\varepsilon)$.
  \item
  The closure of its upper triangle $T\times\{h-\varepsilon\}$ does not intersect $M_\infty^{h+}$.
  \item $P$ does not contain points of $K$.
\end{itemize}

  Since $M_\infty$ is the limit of the $M_n$, it follows that there is some $n_0$ such that $P$ is also a $2+1$-prism for $(M_n, K)$, for $n\geq n_0$.

  However, if $P$ is a $2+1$-prism for $(M_n, K)$, and since it contains $p$, then $p$ would be removed by the ($2+1$, $K$) reduction at height $h_j$ during step $n$.
  Indeed, if $\bar{p}$ is not removed by a ($2+1$, $K$) reduction at height $h_j$, that's because $\bar{p}\in \co \left(\left( M_n^{h_{j+1}}\cap M_n^{h_{j-1}}\right) \cup  K^{h_j}\right)$.
  We know that $p\not\in K$.
  If $\bar{p}\in  M_n^{h_{j+1}}\cap M_n^{h_{j-1}}$, then the vertical segment $\{\bar{p}\}\times(h_{j-1}, h_{j+1})\subset M_n$ would intersect both the upper and the lower triangle in $P$.
  Since $M_n$ is $2+1$ convex, $(M_n)^{h_{j+1}}\cap  (M_n)^{h_{j-1}}\subset (M_n)^{h_j}$, and $P$ would not be a $2+1$-prism for $(M_n,K)$.

  The only remaining possibility is that $p$ is in the interior of a segment with one endpoint $r_1\in K^{h_j}$ and the other $r_2\in M_n^{h_{j+1}}\cap  M_n^{h_{j-1}}\subset M_n^{h_j}$.
  The triangle $T$ cannot intersect the segment $\overline{r_1 r_2}$ at two points, since $P$ is a $2+1$-prism for $(M_n,K)$.
  Since $T$ cannot contain $r_1$ but must contain $p$, $T$ must intersect $\overline{r_1 p}$ at one point.
  It follows that $T$ must contain $r_2$, but then the segment $\{r_2\}\times(h_{j-1}, h_{j+1})\subset M_n$ intersects both the upper and the lower triangle in $P$.

  We have reached a contradiction that proves that all extremal points of $M_\infty$ belong to $K$.
  $\qed$

  \paragraph{\textbf{Claim:}} $M_\infty^{h}$ is a finite polygon for any $h\in \R$.

  Assume on the contrary that there is at least one $h$ such that $M_\infty^{h}$ is not a finite polygon, and let $h_0$ be the smallest such $h$.
  Let $E$ be the set of extremal points of $M\cap\{z=h_0\}$ for standard convexity.
  For each point $p=(x,y,h_0)\in E$, let $p^+$ be the point with the highest $z$ coordinate among all points in $M_\infty$ with the same projection as $p$ to the $\{z=0\}$ plane, and define $p^-$ analogously:
  $$
  h^{+} = \max\{h\in\R: (x,y,h)\in M_\infty\} \Rightarrow p^+=(x,y,h^+)
  $$
  $$
  h^{-} = \min\{h\in\R: (x,y,h)\in M_\infty\} \Rightarrow p^-=(x,y,h^-)
  $$
  Since $M_\infty$ is $2+1$-convex, the segment $[p^-,p^+]$ is contained in $M_\infty$.

  The set $E\setminus(\pi)^{-1}(F)$ consists of the points of $E$ that do not project to $F=\pi(K)$.
  For a point $p\in E\setminus(\pi)^{-1}(F)$, neither $p^+$ nor $p^-$ can be horizontally extremal, because then they would belong to $K$.
  The point $p^+$ belongs to the interior of a segment $[q_1^+, q_2^+]$
  contained in $(M_\infty)^{h^+} $.
  If $M_\infty\cap \{z=h^-\}$ were a 2D polygon and $p^-$ belonged to its interior, then a subsegment $(r_1,r_2)$ of $[q_1^+, q_2^+]$ would be contained in both $M_\infty^{h^+}$ and $M_\infty^{h^-}$, and hence $(r_1,r_2)\times[p^-,p^+]$ would be contained in $M_\infty$, and $p$ would not be an extremal point of $M_\infty\cap\{z=h_0\}$.

  Hence $p^-$ belongs to the relative boundary of $M_\infty\cap\{z=h^-\}$, for some $h^-<h_0$.
  The collection of all the sides of polygons $M_\infty\cap\{z=h\}$ for $h<h_0$ is finite, since there are only a finite number of different sets $M_\infty^{h}$, and by hypothesis, each $M_\infty^{h^-}$ is a finite polygon.
  Each side can contribute at most two extremal points of $M_\infty\cap\{z=h_0\}$ to $E$, since three aligned points cannot be all horizontally extremal.
  Hence $E\setminus(\pi)^{-1}(F)$ is finite.
  Since $F$ is finite and $E\subset\{z=h_0\}$, it follows that $E\cap(\pi)^{-1}(F)$ is also finite.
  Hence $E$ is finite, which is a contradiction, and it follows that $M_\infty\cap\{z=h\}$ is a finite polygon for any $h\in\R$.
  $\qed$

  \paragraph{\textbf{Conclusion:}}
  It follows that $M_\infty$ can be described as the union of a finite number of polygons at the heights in $H$ and a finite number of prisms of the form $P=C\times (h_j,h_{j+1})$ for a finite 2D polygon $C$.
  Thus $M_\infty$ is a $2+1$ $K$ complex.
\end{proof}

\begin{proof}[Proof of Theorem \ref{thm: maintheorem1}]
  By theorem \ref{thm: the kebab sequence consists of outer approximations}, $K^{rc}\subset K^{\pc}$.
  By theorem \ref{thm: the limit is a finite complex}, $K^{\pc}$ is a $2+1$ complex, which is by definition contained in the $2+1$ complexes convex hull $K^{cc}$ of $K$.
  By lemma \ref{lem:KcccontenidoKrc}, $K^{cc}\subset K^{rc}$.
  Hence $K^{\pc}\subset K^{cc}\subset K^{rc}\subset K^{\pc}$.
\end{proof}

\begin{proof}[Proof of Theorem \ref{thm: scaffolding}]
  The set of vertices of $K^{rc}\cap {z=h_j}$, for each $h_j\in H$ is a finite scaffolding for $K$.
\end{proof}

\begin{rem}
  The ideas in the proof of theorem \ref{thm: the limit is a finite complex} are enough to bound the number of points in the scaffolding of $K$ in terms only of the cardinal of $K$. However, without new ideas, the bounds would be exponential, and in any case they do not help find $K^{rc}$ in finite time.
\end{rem}

\section{Conclusions}\label{section:conclusions}

\subsection{On $2+1$-prisms}
We have chosen to use theorem \ref{thm:kirchheimballs} only with $2+1$-prisms, but that theorem can be used in other ways to improve an outer approximation.
If $K^{rc}\subset M$, then theorem \ref{thm:kirchheimballs} shows that
$$M'=(M\setminus B)\cup \left(\left[(B\cap K)\cup (\partial B\cap M)\right]^{rc}\cap B\right)$$
is also a (hopefully smaller) outer approximation.

A natural generalization of $2+1$-prisms is to consider sets $B$ such that $B\cap K=\emptyset$ and $\partial B\cap M$ is $2+1$ convex.
This generalization can provide improvements over $2+1$-prisms, but it is not obvious how to turn this use of theorem \ref{thm:kirchheimballs} into an algorithm.
It is not clear whether generalization to a family with finitely many parameters can find $K^{rc}$ exactly in finite time and memory.
Another approach that might work is to perform even more cuts with prisms in
each step.

On the one hand, the proof of theorem \ref{thm: the limit is a finite complex} limits the possible choices for points in each height, and hence the number of possible candidates for $K^{\pc}=K^{rc}$.
However, this is not practical, and we have some empirical evidence that it will not be easy to find the finite object $K^{rc}$ exactly in reasonable time and memory.
For any finite set $R\subset\R^2$, let $R'$ be the set of points of $\R^2$ that are the intersection of two segments with endpoints in $R$, and let $R^{(2)}=(R')'$ and $R^{(n+1)}=(R^{(n)})'$ for any natural $n$.
We have found examples of finite sets $K$ such that $K^{rc}$ can be computed through the outer algorithm in finitely many steps, and it contains vertices whose first two coordinates lie in $R^{(n)}$ for $n$ up to $4$.
Those examples are rather cumbersome, but while we try to understand them
better, they provide evidence that it will not be easy, and may not be possible at all, to find $K^{rc}$ exactly in polynomial time and with a polynomial memory footprint.

\subsection{Generalizations}
The definitions \ref{def: 2+1 complejo} of $2+1$-complex, and \ref{defn: 2+1 complexes convex hull} of $2+1$-complex convex hull, and also lemma \ref{lem:KcccontenidoKrc} are easy to generalize to arbitrary wave cones $D$.
Definition \ref{defn: local 2+1 prism outer approximation} can be extended to arbitrary wave cones $D$ in this way:
  The local $D$-ball convex hull $K^{D-\text{ball}}$ is the complement of the set of points that can be removed from $\co(K)$ by a finite sequence of applications of Theorem \ref{thm:kirchheimballs}.
  Another natural idea is to use only sets $B$ such that $B\cap K=\emptyset$ and $\partial B\cap M$ is $D$ convex.
  Although this definition is unsuitable for computation, it might work for theoretical applications.

  It does not seem difficult to prove that the $D$ extremal points of $K^{D-\text{ball}}$ would belong to $K$, but that knowledge would not be of much use.
  The following, and more interesting conjectures do not seem at all trivial, and are possibly not true in their full generality:
\begin{enumerate}
  \item For any wave cone $D$ there is a family of semialgebraic sets (called $D$-prisms), parameterized by finitely many real numbers, such that $K^{D-\text{ball}}$ is also the complement of the set of points that can be removed from $\co(K)$ by a finite sequence of applications of an adequate generalization of theorem \ref{thm:kirchheimprisms} (or a restriction of theorem \ref{thm:kirchheimballs}, as you wish to look at it).
  \item $K^{D-\text{ball}}=K^{rc}$.
  \item
  $K^{D-\text{ball}}$ is a finite $D$-complex (defined as a semi-algebraic set that is foliated by lines of $D$).
  \item For any wave cone $D$ and finite set $K$ there is a scaffolding $\widetilde{K}$ of finite order $n$: $K^{rc}=\widetilde{K}^{lc,n}$.

\end{enumerate}
The first of the above conjectures alone would give rise to an algorithm that could approximate $K^{rc}$ from outside.

\end{document}